\documentclass[10pt, a4paper]{amsart}
 
 \usepackage[top=2.5cm,bottom=2.5cm,left=3.7cm,right=3.7cm]{geometry} 
\usepackage[pdftex]{graphicx}
\usepackage{amsmath}
\interdisplaylinepenalty=2500
\usepackage{amsfonts}
\usepackage{algorithmic}
\usepackage{array}

%%%%%%%%%%%%%%%%%%%%%%%%%%%%%%%%%%%%%%%%%%%%%%%%%%%
%COMMANDS
%%%%%%%%%%%%%%%%%%%%%%%%%%%%%%%%%%%%%%%%%%%%%%%%%%%
\newcommand{\set}[1]{\left\{ #1 \right\}}

\newcommand{\isom}{\xrightarrow{\sim}}
\newcommand{\Id}{\mathrm{Id}}
\newcommand{\N}{\mathbb{N}}
\newcommand{\Z}{\mathbb{Z}}

\newcommand{\R}{\mathbb{R}}

\newcommand{\F}{\mathbb{F}}
\newcommand{\A}{\mathbb{A}}
\newcommand{\PP}{\mathbb{P}}
\newcommand{\GL}{\mathrm{GL}}

\newcommand{\vu}{\mathbf{u}}
\newcommand{\vh}{\mathbf{h}}

\newcommand{\PGL}{\mathrm{PGL}}
\newcommand{\PGr}{\mathbb{G}\mathrm{r}}
\newcommand{\ceil}[1]{\left\lceil #1 \right\rceil}
\newcommand{\floor}[1]{\left\lfloor #1 \right\rfloor}

%%%%%%%%%%%%%%%%%%%%%%%%%%%%%%%%%%%%%%%%%%%%%%%%%%
%ENVIRONMENTS
%%%%%%%%%%%%%%%%%%%%%%%%%%%%%%%%%%%%%%%%%%%%%%%%%%
\theoremstyle{plain}
\newtheorem{theorem}{Theorem}
\newtheorem{proposition}[theorem]{Proposition}
\newtheorem{corollary}[theorem]{Corollary}
\newtheorem{lemma}[theorem]{Lemma}

\theoremstyle{definition}

\newtheorem{definition}[theorem]{Definition}

\newtheorem{example}[theorem]{Example}

\newtheorem{remark}[theorem]{Remark}

\hyphenation{op-tical net-works semi-conduc-tor}
\numberwithin{equation}{section}
\numberwithin{theorem}{section}

\linespread{1.3}

\begin{document}

\title[Fractional Jumps]{Full Orbit Sequences in Affine Spaces via Fractional Jumps and Pseudorandom Number Generation}

%%%%%%%%%%%%%%%%%%%%%%%%%%%%%%%%%%%%%%%%%%%%%%%%%%%
%AUTHORS
%%%%%%%%%%%%%%%%%%%%%%%%%%%%%%%%%%%%%%%%%%%%%%%%%%%
\author{Federico Amadio Guidi}
\address{Mathematical Institute, University of Oxford, Oxford, UK}
\email{federico.amadio@maths.ox.ac.uk}
\author{Sofia Lindqvist}
\address{Mathematical Institute, University of Oxford, Oxford, UK}
\email{sofia.lindqvist@maths.ox.ac.uk}
\author[Giacomo Micheli]{Giacomo Micheli$^*$}
\thanks{$^*$ Corresponding author.}
\address{Mathematical Institute, University of Oxford, Oxford, UK}
\email{giacomo.micheli@maths.ox.ac.uk}

%%%%%%%%%%%%%%%%%%%%%%%%%%%%%%%%%%%%%%%%%%%%%%%%%%%
%%%%%%%%%%%%%%%%%%%%%%%%%%%%%%%%%%%%%%%%%%%%%%%%%%%

\maketitle

\begin{abstract}
Let $n$ be a positive integer. In this paper we provide a general theory to produce full orbit sequences in the affine $n$-dimensional space over a finite field. For $n=1$ our construction covers the case of the Inversive Congruential Generators (ICG).
In addition, for $n>1$ we show that the sequences produced using our construction are easier to compute than ICG sequences. Furthermore, we prove that they have the same discrepancy bounds as the ones constructed using the ICG.
\end{abstract}
{\footnotesize\noindent\textbf{Keywords}: full orbit sequences, pseudorandom number generators, inversive congruential generators, discrepancy.}\\
{\footnotesize\noindent\textbf{MSC2010 subject classification}: 11B37, 15B33, 11T06,  11K38, 11K45, 11T23, 65C10, 37P25.}

\section{Introduction}

In recent years there has been a great interest in the construction of discrete dynamical systems with given properties (see for example \cite{bib:eich93,bib:EHHW98,bib:HBM17,bib:ost10,bib:OPS10,bib:OS10degree,bib:OS10length}) both for applications (see for example \cite{bib:BW05,bib:chou95,bib:eich91,bib:eich92,bib:GPOS14, bib:NS02, bib:NS03, bib:TW06,bib:winterhof10}) and for the purely mathematical interest that these objects have (see for example \cite{bib:eich91,bib:EMG09,bib:ferraguti2016existence,bib:FMS16,bib:FMS17,bib:GSW03}).
This paper deals with the problem of finding discrete dynamical systems which can be new candidates for pseudorandom number generation.

Let us denote the set of natural numbers by $\N$. Given a finite set $S$, a sequence $\{a_m\}_{m\in \N}$ of elements in $S$ is said to have \emph{full orbit} if for any $s\in S$ there exists $m\in \N$ such that $a_m=s$.

Let $q$ be a prime power, $\F_q$ be the finite field of cardinality $q$, and $n$ be a positive integer. In this paper  we produce maps $\psi:\F_q^n \rightarrow \F_q^n$ such that 
\begin{itemize}
\item the sequences $\{\psi^m(0)\}_{m\in \N}$ have full orbit (whenever this property is verified, we say that the map $\psi$ is \emph{transitive}), 

\item the sequences constructed from $\psi$ have nice discrepancy bounds, analogous to those constructed from an Inversive Congruential Generator (ICG),
\item they are very inexpensive to iterate: if $n>1$ they are asymptotically less expensive than an  ICG for the same bitrate.
\end{itemize}
In addition, such maps  can be described using quotients of degree one polynomials.

From a purely theoretical point of view related to the full orbit property, one of the reasons why such constructions are interesting is that one cannot build transitive affine maps (i.e. of the form $x\mapsto Ax+b$, with $A$ an invertible $n\times n$ matrix and $b$ an $n$-dimensional vector) unless either $n=1$ and $q$ is prime, or $n=2$ and $q=2$ (see Theorem \ref{affine_transitivity_theorem}). 

For $n=1$ our construction covers the well-studied case of the ICG, for which we obtain easy proofs of classical facts (see for example Remark \ref{remarkICGfullorbit}). In fact, we fit the theory of full orbit sequences in a much wider context, where tools from projective geometry can be used to establish properties of the sequences produced with our method (see for example Proposition \ref{theorem_uniformity}).

Let us now summarise the results of the paper. The main tool we use to construct full orbit sequences is the notion of fractional jump of projective maps, which is described in Section \ref{affine_jumps}. 
With such a notion we are able to produce maps in the affine space which can be guaranteed to be transitive when they are fractional jumps of transitive projective maps. In Section \ref{transitivity_projective} we characterise transitive projective maps using the notion of projective primitivity for polynomials (see Definition \ref{projectively_primitive_polynomial}). In Section \ref{uniformity} we show that whenever our sequences come from  the iterations of transitive projective automorphisms, they behave quite uniformly with respect to proper projective subspaces (i.e. not many consecutive element in the sequence can lie in a proper subspace of the projective space). This fact (and in particular Proposition \ref{theorem_uniformity}) will allow us in Section \ref{explicit} to give an explicit description of the fractional jump of a transitive projective map, finally leading to the new explicit constructions of full orbit sequences promised earlier. In turn, such a description and the theory developed in Section \ref{transitivity_projective} allow us to prove the discrepancy bounds of Theorem \ref{thm:discrepancy} in Section \ref{discrepancy}. In Section \ref{computation} we show the computational advantage of our approach compared to the classical ICG one.
Finally, we include some conclusions which summarise the results of the paper.

%%%%%%%%%%%%%%%%%%%%%%%%%%%%%%%%%%%%%%%%%%%%%%%%%%%
%%%%%%%%%%%%%%%%%%%%%%%%%%%%%%%%%%%%%%%%%%%%%%%%%%%

\subsection*{Notation}

Let us denote the set of natural numbers by $\N$, and the ring of integers by $\Z$. For a commutative ring with unity $R$, let us denote by $R^*$ the group of invertible elements of $R$.

We denote by $\F_q$ the finite field of cardinality $q$, which will be fixed throughout the paper, and by $\overline{\F}_q$ an algebraic closure of $\F_q$. Given an integer $n \geq 1$, we often denote the $n$-dimensional affine space $\F_q^n$ by $\A^n$. The $n$-dimensional projective space over the finite field $\F_q$ is denoted by $\PP^n$. Also, we denote by $\PGr (d, n)$ the set of $d$-dimensional projective subspaces of $\PP^n$.

We denote by $\F_q[x_1,...,x_n]$ the ring of polynomials in $n$ variables with coefficients in $\F_q$. For a polynomial $a \in \F_q[x_1,...,x_n]$ we denote by $\deg a$ its total degree, which we will simply call its degree. Also, for $b\in\F_q[x_1,\dots,x_n]$ we let $V(b)$ denote the set of points $x\in \A^n$ such that $b(x)=0$.

We denote by $\GL_n (\F_q)$ the general linear group over the field $\F_q$, i.e.  the group of $n \times n$ invertible matrices with entries in $\F_q$, and by $\PGL_{n+1} (\F_q)$ the group of automorphisms of $\PP^{n}$. Recall that $\PGL_{n+1} (\F_q)$ can be identified with the quotient group $\GL_{n+1} (\F_q) / \F_q^*\Id$, where $\F_q^*\Id$ is just the subgroup of nonzero scalar multiples of the identity matrix $\Id$. Given a matrix $M \in \GL_{n+1} (\F_q)$, we denote by $[M]$ its class in $\PGL_{n+1} (\F_q)$.

Let $X$ be either $\A^n$ or $\PP^n$. We will say that a map $f : X \rightarrow X$ is \emph{transitive}, or equivalently that it \emph{acts transitively on $X$}, if for any $x, y \in X$ there exists an integer $i \geq 0$ such that $y = f^i (x)$. Equivalently, $f$ is transitive if and only if for any $x \in X$ the sequence $\{ f^m(x)\}_{m \in \N}$ has full orbit, that is $\{ f^m (x) \, : \, m \in \N \} = X$. A map $f:\A^n\rightarrow \A^n$ is said to be affine if there exist $A\in \GL_n(\F_q)$ and $b\in \F_q^n$ such that $f(x)=Ax+b$ for any $x\in \A^n$. 

Let $G$ be a group acting on a set $S$. The orbit of an element $s\in S$ will be denoted by $\mathcal O(s)$. For any element $g\in G$, let us denote by $o(g)$ the order of $g$ in $G$.

We write $f\ll g$ or $f=O(g)$ to mean that for some positive constant $C$ it holds that $|f|\le Cg$. The notation $f\ll_\delta g$ or $f=O_\delta(g)$ means the same, but now the constant $C$ may depend on the parameter $\delta$.
For any real vector $\vh=(h_1,\dots, h_n)$, we write 
$\|\vh\|_{\infty}=
\max\{|h_j|\, : \, j\in \{1,\dots, n\}\}$.
Finally, for any prime $p$ and any $z \in \Z$ we write $e_p (z) = \exp (2 \pi i z / p)$.

%%%%%%%%%%%%%%%%%%%%%%%%%%%%%%%%%%%%%%%%%%%%%%%%%%%
%%%%%%%%%%%%%%%%%%%%%%%%%%%%%%%%%%%%%%%%%%%%%%%%%%%
\section{Fractional jumps} \label{affine_jumps}

Fix the standard projective coordinates $X_0, \ldots, X_n$ on $\PP^n$, and the canonical decomposition 
\begin{equation} \label{decomposition}
\PP^n = U \cup H,
\end{equation}
where
\begin{align*}
U &= \set{[X_0: \ldots: X_n] \in \PP^n \, : \, X_n \neq 0}, \\
H &= \set{[X_0: \ldots: X_n] \in \PP^n \, : \, X_n = 0}.
\end{align*}

There is a natural isomorphism of the affine $n$-dimensional space into $U$ given by
\begin{equation} \label{pi_definition}
\pi : \A^n \isom U, \quad (x_1, \ldots, x_n) \mapsto [x_1: \ldots: x_n: 1],
\end{equation}
%with inverse 
%\begin{equation*}
%\pi^{-1} : U \isom \A^n, \quad [X_0: \ldots: X_n] \mapsto \rnd{\frac{X_0}{X_n}, \ldots, \frac{X_{n-1}}{X_n}}.
%\end{equation*}

Let now $\Psi$ be an automorphism of $\PP^n$. We give the following definitions:

\begin{definition}
For $P \in U$, the \emph{fractional jump index of $\Psi$ at $P$} is
\begin{equation*}
\mathfrak{J}_P = \min \set{k \geq 1 \, : \, \Psi^k (P) \in U}.
\end{equation*}
\end{definition}

\begin{remark}
The fractional jump index $\mathfrak{J}_P$ is always finite, as it is bounded by the order of $\Psi$ in $\PGL_{n+1} (\F_q)$.
\end{remark}

\begin{definition}
The \emph{fractional jump of $\Psi$} is the map
\begin{equation*}
\psi : \A^n \rightarrow \A^n, \quad x \mapsto \pi^{-1} \Psi^{\mathfrak{J}_{\pi(x)}} \pi (x).
\end{equation*}
\end{definition}

Roughly speaking, the purpose of defining this new map is to avoid the points which are mapped outside $U$ via $\Psi$. This is done simply by iterating $\Psi$ until $\Psi(\pi (x))$ ends up again in $U$. In this definition, $\pi$ is simply used to obtain the final map defined over $\A^n$ instead of $U$. A priori, one of the issues here is that a global description of the map might be difficult to compute, as in principle it depends on each of the $x\in \A^n$. It is interesting to see that this does not happen in the case in which $\Psi$ is transitive on $\PP^n$:
in fact, we will show in Section \ref{explicit} that there always exists a set of indices $I$, a disjoint covering $\set{U_i}_{i \in I}$ of $\A^n$, and a family $\set{f^{(i)}}_{i \in I}$ of rational maps of degree $1$ on $\A^n$ such that
\begin{enumerate}
\item[i)] $|I| \leq n+1$,
\item[ii)] $f^{(i)}$ is well-defined on $U_i$ for every $i \in I$,
\item[iii)] $\psi (x) = f^{(i)} (x)$ if $x \in U_i$.
\end{enumerate}
That is, $\psi$ can be written as a multivariate linear fractional transformation on each $U_i$. In addition, for any fixed $i\in \{1,\dots n+1\}$, all the denominator of the $f^{(i)}$'s will be equal.

 \begin{example} \label{inversive}
Let $n=1$. For $a\in \F_q^*$ and $b\in \F_q$ and
 \begin{equation*}
 \Psi ([X_0: X_1]) = [b X_0 + a X_1: X_0]
 \end{equation*}
 we get the case of the inversive congruential generator. In fact, the fractional jump index of $\Psi$ is given by
 \begin{equation*}
\mathfrak{J}_P = \begin{cases}
1, & \text{if } P \neq [0, 1], \\
2, & \text{if } P = [0, 1],
\end{cases}
\end{equation*}
and $\Psi^2 ([0, 1]) = [b, 1]$. Therefore, the fractional jump $\psi$ of $\Psi$ is defined on the covering $\set{U_1, U_2}$, where $U_1 = \A^1 \setminus \set{0}$ and $U_2 = \set{0}$, by
\begin{equation*}
\psi (x) = \begin{cases}
\frac{a}{x} + b, & \text{if } x \neq 0, \\
b, & \text{if } x = 0.
\end{cases}
\end{equation*}
The inversive sequence is then given by $\set{\psi^m (0)}_{m \in \N}$, which has full orbit under suitable assumptions on $a$ and $b$ (see for example \cite[Lemma FN]{bib:chou95}).
\end{example}

\begin{remark} \label{remark_transitivity_affine_jump}
 Let $\Psi$ be an automorphism of $\PP^n$ and let $\psi$ be its fractional jump. It is immediate to see that if $\Psi$ acts transitively on $\PP^n$ then $\psi$ acts transitively on $\A^n$.
 \end{remark}
 
For the case of $n = 1$, the next proposition shows that the notion of transitivity for $\Psi$ and its fractional jump $\psi$ are actually equivalent, under the additional assumption that $\Psi$ sends a point of $U$ to a point of $H$ (which is equivalent to ask that the induced map on $\A^1$ is not affine).

 \begin{proposition} \label{transitivity_affine_jump_P1}
Let $\Psi$ be an automorphism of $\PP^1$ and let $\psi$ be its fractional jump. Assume that $\Psi$ sends a point of $U$ to the point at infinity. Then, $\Psi$ acts transitively on $\PP^1$ if and only if $\psi$ acts transitively on $\A^1$.
\end{proposition}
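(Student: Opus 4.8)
The plan is as follows. Write $\PP^1 = U \cup H$ with $H = \{[1:0]\}$ a single point and $|U| = q$, so that $|\PP^1| = q+1$; recall that on a finite set a map is transitive precisely when, as a permutation, it is a single cycle of maximal length. The implication ``$\Psi$ transitive $\Rightarrow$ $\psi$ transitive'' is exactly Remark \ref{remark_transitivity_affine_jump}, so the content lies in the converse. First I would record the consequence of the hypothesis: if $P_0\in U$ satisfies $\Psi(P_0)=[1:0]$, then since $\Psi$ is a bijection of $\PP^1$ and $P_0\neq[1:0]$ we must have $\Psi([1:0])\neq[1:0]$, hence $\Psi([1:0])\in U$. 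In particular $[1:0]$ is not fixed by $\Psi$, and $P_0$ is the unique point of $U$ that $\Psi$ sends out of $U$. Identifying $\A^1$ with $U$ via $\pi$, the map $\psi$ is then literally the first-return map of $\Psi$ to $U$: one has $\psi(P)=\Psi(P)$ for $P\in U\setminus\{P_0\}$, and $\psi(P_0)=\Psi^2(P_0)=\Psi([1:0])\in U$.

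The key step is to compare the cycle decompositions of $\Psi$ (acting on $\PP^1$) and of $\psi$ (acting on $U$). A cycle of $\Psi$ not containing $[1:0]$ is contained in $U$ and is also a cycle of $\psi$, since along it the first return is immediate. The cycle $C$ of $\Psi$ through $[1:0]$ has some length $\ell\geq 2$; writing it as $([1:0],\Psi([1:0]),\dots,\Psi^{\ell-1}([1:0]))$ with $\Psi^{\ell-1}([1:0])=P_0$, its intersection with $U$ is $\{\Psi([1:0]),\dots,\Psi^{\ell-1}([1:0])\}$, and on this set $\psi$ acts as the $(\ell-1)$-cycle $(\Psi([1:0]),\dots,\Psi^{\ell-1}([1:0]))$, the only nontrivial check being $\psi(P_0)=\Psi^2(P_0)=\Psi([1:0])$, which closes the cycle. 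As these cycles account for all of $U$, the cycle type of $\psi$ on $U$ is that of $\Psi$ with the unique cycle through $[1:0]$ shortened by one. Consequently $\Psi$ is a single $(q+1)$-cycle on $\PP^1$ if and only if $\psi$ is a single $q$-cycle on $U\cong\A^1$, which is the asserted equivalence.

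The argument has no real analytic difficulty; the points requiring care are the boundary case $\ell=2$ (where $C=\{[1:0],P_0\}$, $C\cap U=\{P_0\}$ and $\psi$ fixes $P_0$, consistent with the formula) and the verification that $\psi$ is genuinely the first-return map to $U$. This last point is where the hypothesis enters decisively: it guarantees that $\Psi$ does not fix $[1:0]$. Without it one could have $\Psi(U)=U$ with $\Psi|_U$ a $q$-cycle, making $\psi$ transitive on $\A^1$ while $\Psi$ merely fixes the point at infinity — precisely the case in which the induced affine map has the form $x\mapsto ax+b$ — so the hypothesis is not just convenient but necessary for the converse.
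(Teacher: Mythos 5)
Your proof is correct. The two essential facts you use are the same ones the paper uses: the hypothesis forces $\Psi([1:0])\in U$ (since the unique $\Psi$-preimage of $[1:0]$ already lies in $U$), and $\psi$ is the first-return map of $\Psi$ to $U$. Where you differ is in how the converse is closed: the paper argues directly, taking arbitrary $P,Q\in\PP^1$ and exhibiting $i$ with $Q=\Psi^i(P)$ by a short case analysis (both in $U$, or one equal to the point at infinity), whereas you compare the full cycle decompositions of $\Psi$ on $\PP^1$ and of $\psi$ on $U$ and observe that passing to the first-return map shortens the unique cycle through $[1:0]$ by exactly one while leaving all other cycles untouched. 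Your version is slightly more structural: it proves the equivalence in both directions at once, handles uniformly the case the paper's case split treats somewhat tersely (namely $Q$ equal to the point at infinity), and gives as a byproduct the exact relation between the cycle types of $\Psi$ and $\psi$, which is more information than the proposition asks for. The closing remark that the hypothesis is genuinely necessary (otherwise $\Psi$ could fix $[1:0]$ and restrict to a $q$-cycle on $U$, e.g.\ $x\mapsto x+1$ for $q$ prime) is a nice complement that the paper only hints at via the parenthetical about affine maps.
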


\begin{proof}
As already stated in Remark \ref{remark_transitivity_affine_jump}, if $\Psi$ is transitive on $\PP^1$ then $\psi$ is obviously transitive on $\A^1$. Conversely, assume that $\psi$ is transitive on $\A^1$. Consider the decomposition $\PP^1 = U \cup H$ of $\PP^1$ as in \eqref{decomposition}. Since $n = 1$, we have $H = \set{P_0}$, for $P_0 = [1 : 0]$. Since there exists $P_1 \in U$ such that $\Psi (P_1) = P_0$, we have that $\Psi^2 (P_1) = \Psi (P_0) \in U$, as otherwise the point $P_0$ would have two preimages under $\Psi$, which is not possible as $\Psi$ is an automorphism, and so in particular a bijection. We have to prove that given $P, Q \in \PP^1$ there exists an integer $i\geq 0$ such that $Q = \Psi^i (P)$. Assume that $P$ and $Q$ are distinct, as otherwise we can simply set $i = 0$. We distinguish two cases: either $P, Q \in U$, or one of the two, say $P$, is equal to $P_0$ and $Q \in U$. In the first case, the claim follows by transitivity of $\psi$. In the second case, reduce to the previous case by considering $\Psi (P_0), Q \in U$.
\end{proof}

One can  actually prove that affine transformations of $\A^n$ are never transitive, unless restrictive conditions on $q$ and $n$ apply. Actually, the result that follows will not be used in the rest of the paper but provides additional motivation for the study of fractional jumps of projective maps and for completeness we include its proof.

\begin{theorem} \label{affine_transitivity_theorem}
There is no affine transitive transformation of $\A^n$ unless $n = 1$ and $q$ is prime, or $q = 2$ and $n = 2$, with explicit examples in both cases.
\end{theorem}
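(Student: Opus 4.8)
The plan is to analyze when the affine map $f(x) = Ax + b$ on $\A^n = \F_q^n$ can be a single $q^n$-cycle. First I would reduce to understanding the orbit of $0$, whose $m$-th iterate is $f^m(0) = (A^{m-1} + A^{m-2} + \cdots + A + \Id)\, b = S_m\, b$, where $S_m = \sum_{j=0}^{m-1} A^j$. Transitivity is equivalent to requiring that $S_1 b, S_2 b, \ldots, S_{q^n} b$ run through all of $\F_q^n$, and in particular that $S_{q^n} b = 0$ while no earlier partial sum vanishes or repeats. Since $(A - \Id) S_m = A^m - \Id$, the behavior splits naturally according to whether $1$ is an eigenvalue of $A$.

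The key dichotomy: if $A - \Id$ is invertible (i.e. $1$ is not an eigenvalue), then $S_m b = (A-\Id)^{-1}(A^m - \Id) b$, so $f^m(0) = 0$ forces $A^m b = b$; taking $m = o(A)$ (the multiplicative order of $A$ in $\GL_n(\F_q)$) already returns $0$ to itself, and $o(A) \le |\GL_n(\F_q)| < q^n$ for all $n \ge 1$ except small cases — more carefully, one shows the orbit length divides something strictly smaller than $q^n$ whenever $n \ge 2$, and for $n = 1$ this case gives the multiplicative (not full) orbit. So full orbit in the non-eigenvalue-$1$ case is essentially impossible beyond trivialities. If instead $1$ \emph{is} an eigenvalue of $A$, I would put $A$ in a form adapted to the decomposition $\F_q^n = \ker(A-\Id)^n \oplus \mathrm{im}(A-\Id)^n$ (Fitting decomposition): on the second summand $A - \Id$ is invertible and the previous analysis caps the period, on the first summand $A$ is unipotent. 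For a transitive map the unipotent part must carry essentially all the "length", and a unipotent matrix $U$ over $\F_q$ satisfies $(U - \Id)^n = 0$, so $U^{p^{\lceil \log_p n\rceil}} = \Id$ by the Frobenius/freshman's-dream expansion $(U)^{p^k} = (\Id + (U-\Id))^{p^k} = \Id + (U-\Id)^{p^k}$. This forces the order of $U$ to be a power of $p$ bounded by $p^{\lceil \log_p n \rceil}$, while the corresponding block has $q^{d}$ points where $d = \dim\ker(A-\Id)^n$; comparing $p$-adic valuations and sizes pins down that the only surviving possibilities are $n = 1$ with $q = p$ prime (translation $x \mapsto x + b$, $b \ne 0$, on $\F_p$), and $q = 2$, $n = 2$ (where $|\GL_2(\F_2)| = 6$, $|\A^2| = 4$, and one checks by hand that e.g. $A = \begin{pmatrix} 1 & 1 \\ 1 & 0\end{pmatrix}$ with a suitable $b$ gives a $4$-cycle).

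I would organize the write-up as: (1) the partial-sum formula and the reduction to the orbit of $0$; (2) the case $1 \notin \mathrm{Spec}(A)$, bounding the period by $o(A)$ and ruling out full orbit for $n \ge 2$ (and identifying $n=1$ as the ICG/multiplicative case, not full orbit unless $b$ forces otherwise — but then one is really in the eigenvalue case after conjugation); (3) the Fitting decomposition reducing the general case to "unipotent $\oplus$ fixed-point-free"; (4) the unipotency bound $o(U) \le p^{\lceil \log_p n\rceil}$ via the $p$-th power expansion; (5) the numerical endgame comparing $o(f) \mid \mathrm{lcm}$ of the two block orders against $q^n$, leaving only $(n,q) = (1, p)$ and $(2,2)$; (6) exhibit the explicit examples. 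The main obstacle I anticipate is step (5): getting a clean contradiction requires carefully tracking that the orbit length of $f$ on $\A^n$ divides (a small multiple of) $\mathrm{lcm}(o(A|_{\mathrm{im}}), o(A|_{\ker}))$ — the interaction of the two Fitting blocks through the vector $b$ needs a short lemma (essentially that the orbit of $0$ under $f$ has length equal to the order of $[f]$ as an affine transformation, which divides the relevant lcm possibly times $p$), and then one must check the inequality $p^{\lceil \log_p n \rceil} \cdot o(A|_{\mathrm{im}(A-\Id)^n}) < q^n$ holds in all but the two listed cases, which is a finite but slightly fiddly case analysis for very small $q$ and $n$.
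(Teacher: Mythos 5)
Your overall strategy (force the linear part to be unipotent, bound orders, then finish with small cases) is in the same spirit as the paper's proof, but as written it has genuine gaps. The most serious one is that your ``numerical endgame'' is not a finite check. After all your reductions, the case $n=2$, $q=p$ with $A$ a nontrivial unipotent survives every counting bound you propose: there $o(A)=p$ and $o(\varphi)\le p\cdot o(A)=p^2=q^n$, so no comparison of orders against $q^n$ can rule it out, and this is an infinite family in $p$, not ``very small $q$ and $n$''. One must actually compute orbits: the paper conjugates $A$ to $\begin{pmatrix}1&1\\0&1\end{pmatrix}$ and shows that $\widetilde\varphi^{\,p}$ fixes the translation vector $c=(r,s)^T$ because the displacement is $\bigl(s\sum_{i=1}^p i,\,0\bigr)^T$, which vanishes mod $p$ exactly when $p$ is odd; hence only $p=2$ survives (and $n=3$, $q=2$ still needs its own exhaustive check). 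Your plan contains no argument for this step, and it is the heart of the theorem. Relatedly, the step (5) inequality does not close the mixed Fitting case either: for $n=2$, $\dim\ker(A-\Id)^n=1$, $q=p$ one gets $p\cdot p\cdot(p-1)\ge p^2$, so no contradiction. That case should instead be killed structurally: projecting onto the Fitting summand where $A-\Id$ is invertible is equivariant onto an affine map with a fixed point, so transitivity forces that summand to be zero and $A$ to be unipotent; you need to say this, since the lcm bound alone will not do it.

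Two further concrete errors. First, $o(A)\le|\GL_n(\F_q)|<q^n$ is false for $n\ge2$ (the group is far larger than $q^n$); the correct statement is $o(A)\le q^n-1$ (the order of a unit of the commutative algebra $\F_q[A]$, which has at most $q^n$ elements), or more simply: if $A-\Id$ is invertible then $\varphi$ has a fixed point and cannot be transitive. Second, your proposed example for $(q,n)=(2,2)$ is wrong: $A=\begin{pmatrix}1&1\\1&0\end{pmatrix}$ satisfies $A^2+A+\Id=0$ over $\F_2$, so $\varphi^3=\Id$ for every $b$ and no $4$-cycle exists; this $A$ is not even unipotent, contradicting your own analysis. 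The correct example, as in the paper, is $A=\begin{pmatrix}1&1\\0&1\end{pmatrix}$ with $b=(1,1)^T$. For comparison, the paper reaches unipotency without the Fitting decomposition: transitivity gives $o(\varphi)=q^n$, a short computation gives $o(A)=q^n/p$, hence $\mu_A(T)\mid(T-1)^{q^n/p}$, and bounding the order of $\overline{T}$ in $(\F_q[T]/((T-1)^d))^*$ yields $o(A)\le q^n/p^2$ unless $q^n/p^2<n$, which leaves exactly $(n,q)\in\{(1,p),(2,p),(3,2)\}$ for the case analysis described above.
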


\begin{proof}
For convenience of notation, in this proof we will identify the points of $\A^n$ with columns vectors in $\F_q^n$.
Let us first deal with the pathological cases.
For $n=1$ it is trivial to observe that $x\mapsto x+1$ has full orbit if and only if $q$ is prime.
For $n=2$ and $q=2$, we get by direct check that the map
\begin{equation*}
\varphi \begin{pmatrix} x_1 \\ x_2 \end{pmatrix} = \begin{pmatrix} 1 & 1 \\ 0 & 1 \end{pmatrix} \cdot \begin{pmatrix} x_1 \\ x_2 \end{pmatrix} + \begin{pmatrix} 1 \\ 1 \end{pmatrix}, \quad \begin{pmatrix} x_1 \\ x_2 \end{pmatrix} \in \F_2^2,
\end{equation*}
has full orbit.

Let $\varphi$ be an affine transformation of the $n$-dimensional affine space over $\F_q$. Then, by definition there exist $A \in \GL_n (\F_q)$ and $b \in \F_q^n$ such that
\begin{equation*}
\varphi (x) = A x + b, \quad x \in \F_q^n.
\end{equation*}

Assume by contradiction that $\varphi$ is transitive, so that the order $o (\varphi)$ of $\varphi$ is $q^n$. Denote by $p$ the characteristic of $\F_q$. We firstly prove that the order $o (A)$ of $A$ in $\GL_n (\F_q)$ is $q^n / p$. Then, we will show how this will lead to a contradiction.

Let $j$ be the smallest integer such that
\begin{equation*}
\varphi^j (x) = x + c, \quad \text{for all }x \in \F_q^n,
\end{equation*}
for some $c \in \F_q^n$. As
\begin{equation} \label{explicit_affine}
\varphi^j (x) = A^j x + \sum_{i = 0}^{j-1} A^i b, \quad x \in \F_q^n,
\end{equation}
we get $o(A) = j$. If $c = 0$, then $o(\varphi) = j = o (A) \leq q^n - 1$, so that $\varphi$ cannot be transitive. We then have $c \neq 0$. By \eqref{explicit_affine}, we get $\varphi^{j p} = \Id$, therefore $o(\varphi)\mid jp$. We now prove that $o(\varphi) = jp$. Write $o(\varphi) = j s + r$, with $r < j$. Then, we have
\begin{align*}
\varphi^{j s + r} (x) &= \varphi^r (x) + s c \\
&= A^r x + v, \quad x \in \F_q^n,
\end{align*}
for a suitable $v \in \F_q^n$. Since $\varphi^{j s + r} = \Id$, we get that $A^r x + v = x$ for all $x \in \F_q^n$, and so we must have $r = 0$ and $v=0$. It follows that $\varphi^{j s} (x) = x + s c = x$ for all $x \in \F_q^n$, which gives $p \mid s$, as $c \neq 0$, so that we get $p \leq s$, and then $o(\varphi) = j s \geq jp$. Therefore we conclude that $o (\varphi) = jp$. As $\varphi$ is assumed to be transitive, we have that $j p = q^n$, and so $o (A) = j = q^n / p$.
Essentially, what we have proved up to now is that, if such a transitive affine map $\varphi(x)=Ax+b$ exists, then it must have the property that $o (A) = q^n / p$.

Let $\mu_A (T) \in \F_q [T]$ be the minimal polynomial of $A$. By the fact that $o (A) = q^n / p$ we get
\begin{equation*}
\mu_A (T) \mid T^{q^n / p} - 1 = (T-1)^{q^n / p}.
\end{equation*}
Then, $\mu_A (T) = (T - 1)^d$, for some $d \leq n$, as the degree of the minimal polynomial is less than or equal to the degree of the characteristic polynomial by Cayley-Hamilton. From basic ring theory, one gets that the order of $A$ in $\GL_n (\F_q)$ is equal to the order of the class $\overline{T}$ of $T$ in the quotient ring $(\F_q[T] / (\mu_A (T)))^* = (\F_q[T] / ((T - 1)^d))^*$. 
Let us now assume $q^n / p^2 \geq n$. In this case we have
\begin{equation*}
\overline{T}^{q^n / p^2} = (\overline{T}-1)^{q^n / p^2} + 1 = 1,
\end{equation*}
as $q^n / p^2 \geq n \geq  d$. Therefore,  $o (A)=o(\overline T)\leq q^n/p^2 < q^n / p$ from which the contradiction follows.

Therefore we can restrict to the case  $q^n / p^2 < n$. It is easy to see that this inequality forces $q=p$: in fact if $q=p^k$ and $k\geq 2$, then  $q^n/p^2=p^{kn-2}\geq p^{2n-2}\geq 4^{n-1}\geq  n$. Therefore, the only uncovered cases are in correspondence with the solutions of $p^{n-2}<n$, which consist only of the following: $n=3$ and $p=2$, or $n=1$ and $p$ any prime, or $n=2$ and $p$ any prime.
For $n=3$ and $p=2$ an exhaustive computation shows that there is no transitive affine map.
Also, we already know that in the case $n=1$ and $p$ any prime we have such a transitive map, as this is one of the pathological cases.
For the case $n=2$ we argue as follows. Let
\[\varphi(x)=Ax+b\]
be such a transitive affine map.
Clearly $A\in \GL_2(\F_p)$ must be different from the identity matrix, as otherwise $\varphi$ cannot have full orbit. So the minimal polynomial of $A$ is different from $T-1$. On the other hand, the minimal polynomial of $A$ must divide $(T-1)^d$. Since $n=2$, we have that $d=2$. In $\GL_2(\F_p)$ having minimal polynomial $(T-1)^2$ forces a matrix to be conjugate to a single Jordan block of size $2$ with eigenvalue $1$, hence there exists $C\in \GL_2(\F_p)$ such that
\[CAC^{-1}= \begin{pmatrix} 1 & 1 \\ 0 & 1 \end{pmatrix} \]
Let us now consider again the map $\varphi$. Clearly, $\varphi$ is transitive if and only
 if the map $\widetilde \varphi=C\varphi C^{-1}$ is. For any $x\in \F_p^2$ we have that $\widetilde \varphi(x)=C(AC^{-1}x+b)$. Therefore
 the map $\widetilde \varphi$ can be written as
\[\widetilde \varphi\begin{pmatrix} x_1 \\ x_2 \end{pmatrix}=\begin{pmatrix} 1 & 1 \\ 0 & 1 \end{pmatrix} \cdot \begin{pmatrix} x_1 \\ x_2 \end{pmatrix} + \begin{pmatrix} r \\ s  \end{pmatrix}.\]
for some $r,s\in \F_p$.
We will now prove that $\widetilde \varphi^p\begin{pmatrix} r \\ s\end{pmatrix}=\begin{pmatrix} r \\ s \end{pmatrix}$ so that $\varphi$ cannot be transitive, as starting form $c:=\begin{pmatrix} r \\ s\end{pmatrix}$ only visits $p$ points.
\begin{align*}
\widetilde \varphi^p\begin{pmatrix} r \\ s\end{pmatrix}&=\sum^{p}_{i=0} \begin{pmatrix} 1 & 1 \\ 0 & 1 \end{pmatrix}^ic \\
&= c+\sum^p_{i=1}\begin{pmatrix} 1 & i \\ 0 & 1 \end{pmatrix} \cdot \begin{pmatrix} r \\ s \end{pmatrix} \\
&= c+\sum^p_{i=1}\begin{pmatrix} r+is \\ s \end{pmatrix}=c+ \begin{pmatrix} s\sum^p_{i=1}i \\ 0 \end{pmatrix}
\end{align*}
But now the sum $\sum^p_{i=1}i$ is different from zero if and only if $p= 2$. Therefore, such a transitive map could exist only for $p=n=2$. Since we already provided such an example of transitive map, the proof of the theorem is now concluded.

\end{proof}

%%%%%%%%%%%%%%%%%%%%%%%%%%%%%%%%%%%%%%%%%%%%%%%%%%%
%%%%%%%%%%%%%%%%%%%%%%%%%%%%%%%%%%%%%%%%%%%%%%%%%%%

\section{Transitive actions via projective primitivity} \label{transitivity_projective}

In this section we characterise transitive projective automorphisms.
\begin{definition} \label{projectively_primitive_polynomial}
A polynomial $\chi(T) \in \F_q [T]$ of degree $m$ is said to be \emph{projectively primitive} if the two following conditions are satisfied:
\begin{enumerate}
\item[i)] $\chi(T)$ is irreducible over $\F_q$,
\item[ii)] for any root $\alpha$ of $\chi(T)$ in $\F_{q^m} \cong \F_q[T] / (\chi(T))$ the class $\overline{\alpha}$ of $\alpha$ in the quotient group $G = \F_{q^m}^* / \F_q^*$ generates $G$.
\end{enumerate} 
\end{definition}

\begin{remark}
Note that if a polynomial $\chi(T)\in \F_q[T]$ of degree $m$ is primitive, i.e. it is irreducible and any of its roots in $\F_{q^m} \cong \F_q[T] / (\chi(T))$ generates the multiplicative group $\F_{q^m}^*$, then it is obviously projectively primitive. The class of projectively primitive polynomials is in general larger than then the class of primitive polynomials: for example take the polynomial $\chi(T)=T^3+T+1\in \F_5[T]$. One can check that this polynomial is irreducible but is not primitive. In fact, let $\alpha$ be a root of $\chi(T)$. Since  $\alpha^{62}=1\in \F_5[T] / (\chi(T))\cong \F_{5^3}$, and therefore $o(\alpha)\neq 5^3-1=124$, we have that $\chi(T)$ is not primitive. On the other hand $G=\F_{5^3}^* / \F_5^*$ has prime cardinality  equal to $|G|=(5^3-1)/(5-1)=31$ and $\overline{\alpha}\neq 1\in G$. It follows immediately that $\overline \alpha$ has to be a generator of $G$.  
\end{remark}

\begin{remark}
Let $M, M' \in \GL_{n+1} (\F_q)$ be such that $[M] = [M']$ in $\PGL_{n+1} (\F_q)$, and let $\chi_M (T), \chi_{M'} (T) \in \F_q [T]$ be their characteristic polynomials. It is immediate to see that $\chi_M (T)$ is projectively primitive if and only if $\chi_{M'} (T)$ is projectively primitive.
\end{remark}

We are now ready to give a full characterisation of transitive projective automorphisms on $\PP^n$.

\begin{theorem} \label{transitivity_characterisation}
Let $\Psi$ be an automorphism of $\PP^n$. Write $\Psi = [M] \in \PGL_{n+1} (\F_q)$ for some $M \in \GL_{n+1} (\F_q)$. Then, $\Psi$ acts transitively on $\PP^n$ if and only if the characteristic polynomial $\chi_M (T) \in \F_q [T]$ of $M$ is projectively primitive.
\end{theorem}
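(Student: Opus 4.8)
The plan is to reduce the transitivity of $\Psi=[M]$ on $\PP^n$ to a statement about the action of the cyclic group generated by $M$ on the nonzero vectors of $\F_q^{n+1}$, and then to translate that into the projective primitivity of $\chi_M$. First I would observe that $\Psi$ acts transitively on $\PP^n$ if and only if the orbit of a single point, say $[v]$, is all of $\PP^n$; since $|\PP^n|=(q^{n+1}-1)/(q-1)$, this happens exactly when the orbit of $v$ under the subgroup $\langle M\rangle \leq \GL_{n+1}(\F_q)$, pushed down to $\PP^n$, has this full size, equivalently when $o([M]) \geq (q^{n+1}-1)/(q-1)$ and the orbit map is injective at the projective level. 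The cleanest route is: $\Psi$ is transitive $\iff$ there is a vector $v \neq 0$ such that $\{M^i v\}_{i\ge 0}$ meets every line through the origin, i.e. the $M$-orbit of $v$ has exactly $(q^{n+1}-1)/(q-1)$ distinct lines.

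Next I would bring in the characteristic polynomial. If $\chi_M$ is irreducible of degree $n+1$, then $\F_q[M]\cong \F_q[T]/(\chi_M(T))\cong \F_{q^{n+1}}$, and $\F_q^{n+1}$ becomes a one-dimensional $\F_{q^{n+1}}$-vector space on which $M$ acts as multiplication by a root $\alpha$ of $\chi_M$. Fixing any $v\neq 0$ identifies $\F_q^{n+1}\setminus\{0\}$ with $\F_{q^{n+1}}^*$ so that $M^i v \leftrightarrow \alpha^i$, and two such vectors are projectively equal iff their ratio lies in $\F_q^*$. Hence the number of distinct lines in the $M$-orbit of $v$ is exactly the order of $\overline{\alpha}$ in $G=\F_{q^{n+1}}^*/\F_q^*$, and this equals $|G|=(q^{n+1}-1)/(q-1)=|\PP^n|$ precisely when $\overline{\alpha}$ generates $G$, i.e. when $\chi_M$ is projectively primitive. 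This gives the "if" direction outright (choosing $v$ generic) and reduces the "only if" direction to showing that transitivity forces $\chi_M$ to be irreducible of degree $n+1$.

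For that converse step I would argue by contradiction: suppose $\Psi$ is transitive but $\chi_M$ is not irreducible of full degree. Then $M$ has an invariant subspace $W\subsetneq \F_q^{n+1}$ with $0<\dim W<n+1$ — either from a nontrivial factor of $\chi_M$, or, if $\chi_M$ were irreducible of degree $<n+1$ (impossible by degree count) — more carefully, if $\chi_M$ is reducible it has an irreducible factor giving a proper nonzero $M$-invariant subspace $W$; then $\PP(W)$ is a proper projective subspace stable under $\Psi$, so the orbit of any point of $\PP(W)$ stays inside $\PP(W)$ and cannot be all of $\PP^n$, contradiction. Thus $\chi_M$ is irreducible of degree $n+1$, and we are back in the situation of the previous paragraph, where transitivity forces $\overline{\alpha}$ to generate $G$. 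I expect the main obstacle to be the bookkeeping in the "only if" direction — specifically making sure that transitivity really does preclude \emph{every} proper invariant subspace (including handling the possibility that $M$ has eigenvalues in $\F_q$ or that $\chi_M$ has repeated irreducible factors), and cleanly arguing that an $M$-stable proper subspace yields a $\Psi$-stable proper projective subspace that the orbit cannot escape. Once that is in place, the identification with $\F_{q^{n+1}}$ and the quotient group $G$ makes the rest a short computation.
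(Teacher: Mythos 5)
Your proposal is correct, and its engine --- identifying periodic points of $\Psi$ with relations $M^k v = \lambda v$, $\lambda \in \F_q^*$, and hence with the order of $\overline{\alpha}$ in $G = \F_{q^{n+1}}^*/\F_q^*$ --- is the same one driving the paper's proof. The two arguments diverge in how they package it. For the ``if'' direction the paper does not invoke the module structure at all: it shows directly that no point can satisfy $\Psi^k(P)=P$ for $1\le k\le N-1$ (with $N=|\PP^n|$), using that an eigenvalue $\lambda\in\F_q^*$ of $M^k$ must be a $k$-th power of a root of $\chi_M$; your version instead computes the orbit length of a single point exactly via the identification of $\F_q^{n+1}$ with $\F_{q^{n+1}}$ as a one-dimensional $\F_q[M]$-vector space, which is cleaner but presupposes irreducibility (harmless, since projective primitivity includes it). The genuine difference is in the converse. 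You extract irreducibility from the invariant-subspace obstruction --- transitivity forbids any proper nonzero $M$-invariant subspace, and reducibility of $\chi_M$ always produces one --- and you rightly flag that the repeated-factor case needs care: for an irreducible factor $g$ one takes $\ker g(M)$, and if that is the whole space one takes instead a cyclic submodule $\F_q[M]v$, whose dimension is at most $\deg g<n+1$. The paper avoids this bookkeeping entirely: letting $\alpha$ be any root of $\chi_M$, $h=[\F_q(\alpha):\F_q]$, and $d$ the order of $\overline{\alpha}$ in $\F_{q^h}^*/\F_q^*$, the relation $\alpha^d\in\F_q^*$ yields an $\F_q$-eigenvector of $M^d$, hence a fixed point of $\Psi^d$, so transitivity forces $d=N$, and the inequality $N=d\le(q^h-1)/(q-1)$ forces $h=n+1$, delivering irreducibility and primitivity in one stroke. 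Both routes are sound; the paper's counting inequality is the slicker way to get irreducibility, while your invariant-subspace argument is more conceptual and is essentially the same mechanism the paper later uses to prove Proposition \ref{theorem_uniformity}.
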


\begin{proof}
For simplicity of notation, set $N = |\PP^n| = (q^{n+1}-1) / (q-1)$. Assume that $\chi_M (T)$ is projectively primitive. Now we prove that for any $P \in \PP^n$ we have that $\Psi^k (P) \neq P$ for $k \in \set{1, \ldots, N -1}$. Suppose by contradiction that there exists $P_0 \in \PP^n$ such that $\Psi^k (P_0) = P_0$ for some $k \in \set{1, \ldots, N-1}$. Let $v_0 \in \F_q^{n+1} \setminus \set{0}$ be a representative of $P_0$. Then, there exists $\lambda \in \F_q^*$ such that
\begin{equation*}
M^k v_0 = \lambda v_0.
\end{equation*}
This means that $v_0$ is an eigenvector for the eigenvalue $\lambda$ of $M^k$, which implies that $\lambda = \alpha^k$ for some root $\alpha$ of $\chi_M (T)$ in $\F_{q^{n+1}}$. But now, the class $\overline{\alpha}^k$ of $\alpha^k$ in $G = \F_{q^{n+1}}^* / \F_q^*$ is $\overline{\lambda} = \overline{1}$, contradicting the hypothesis that $\overline{\alpha}$ generates $G$.

Conversely, assume that $\Psi$ is transitive, so that for any $P \in \PP^n$ we have that $\Psi^k (P) \neq P$ for $k \in \set{1, \ldots, N -1}$. Let $\alpha$ be a root of $\chi_M(x)$ in its splitting field  and $h$ be a positive integer such that $\F_{q^h}\cong\F_q(\alpha)$. Clearly $1\leq h\leq n+1$. We also have that $\alpha \neq 0$ as $\det M \neq 0$, and $\alpha \notin \F_q^*$ as otherwise $M v_0 = \alpha v_0$ for some eigenvector $v_0 \in \F_q^{n+1} \setminus \set{0}$  for the eigenvalue $\alpha$, so that $\Psi (P_0) = P_0$ for $P_0$ the class of $v_0$ in $\PP^n$, in contradiction with the fact that $\Psi$ is transitive. Let $d$ be the order of the class $\overline{\alpha}$ of $\alpha$ in $\F_{q^h}^* / \F_q^*$. Then, there exists $\lambda \in \F_q^*$ such that $\alpha^d = \lambda$. Now, $\alpha^d$ is an eigenvalue of $M^d$, and so $M^d v_1 = \lambda v_1$ for some eigenvector $v_1\in \F_q^{n+1} \setminus \set{0}$ for the eigenvalue $\alpha^d$. Thus, $\Psi^d (P_1) = P_1$ for $P_1$ the class of $v_1$ in $\PP^n$, and so $d=N$ by the transitivity of $\Psi$. Therefore we have $(q^{n+1}-1)/(q-1)=N=d\leq (q^{h}-1)/(q-1)$. Now, since $h\leq n+1$, this forces $h = n+1$, so that $\chi_M$ is irreducible, which together with $d=N$ gives projective primitivity for $\chi_M$, as we wanted. 
\end{proof}

\begin{remark}\label{remarkICGfullorbit}
When $n = 1$, our approach gives immediately the criterion to get maximal period for inversive congruential generators, see for example \cite[Lemma FN]{bib:chou95}. To see this, set $\Psi$ and $\psi$ as in Example \ref{inversive}, so that $\set{\psi^m (0)}_{m \in \N}$ is an inversive sequence. By Proposition \ref{transitivity_affine_jump_P1}, transitivity of $\Psi$ and $\psi$ are equivalent. If $\chi (T) = T^2 - a T - b$ is irreducible, then $\psi$ acts transitively on $\A^1$ if and only if the class $\overline{\alpha}$ of a root $\alpha$ of $\chi(T)$ in $G = \F_{q^{2}}^* / \F_q^*$ generates $G$, which is itself equivalent to the fact that $\alpha^{q-1}$ has order $q+1$ in $\F_{q^2}^*$ (which is in fact the condition given in \cite[Lemma FN]{bib:chou95}).
\end{remark}

%%%%%%%%%%%%%%%%%%%%%%%%%%%%%%%%%%%%%%%%%%%%%%%%%%%
%%%%%%%%%%%%%%%%%%%%%%%%%%%%%%%%%%%%%%%%%%%%%%%%%%%

\section{Subspace Uniformity} \label{uniformity}

In this section we show that sequences associated to iterations of transitive projective maps behave ``uniformly'' with respect to subspaces, i.e. not too many consecutive points can lie in the same projective subspace of $\PP^n$. 
%Notice that for any positive integer $d$ and any $W\in \PGr (d, n)$ we have that $\Psi(W)\in \PGr (d, n)$.
%We first need to prove the following ancillary result.
%
%\begin{lemma} \label{faithful}
%Let $\Psi$ be a transitive automorphism of $\PP^n$. For any $d \in \set{1, \ldots, n-1}$, $\Psi$ has no fixed points on $\PGr (d, n)$, i.e. $\Psi(W)\neq W$ for any $W\in \PGr (d, n)$.
%\end{lemma}
%
%\begin{proof}
%Let $d \in \set{1, \ldots, n-1}$ and assume there exists $W \in \PGr (d, n)$ such that $\Psi (W) = W$. Then, we get a well defined map
%\begin{equation*}
%\Psi \! \mid_W : W \rightarrow W.
%\end{equation*}
%
%Let $P \in W$. The orbit $\mathcal{O}_W (P)$ of $P$ with respect to the action of $\Psi \! \mid_W$ on $W$ is a subset of $W$, and so
%\begin{align*}
%|\mathcal{O}_W (P)| &\leq |W| \\
%&= \frac{q^{d+1} -1}{q-1}.
%\end{align*}
%
%But now $\mathcal{O}_W (P)$ is the same as the orbit $\mathcal{O} (P)$ of $P$ with respect to the action of $\Psi$ on $\PP^n$, which is the full $\PP^n$ by transitivity. Therefore
%\begin{align*}
%|\mathcal{O}_W (P)| &= |\mathcal{O} (P)| = |\PP^n| \\
%&= \frac{q^{n+1} -1}{q-1} > \frac{q^{d+1} -1}{q-1},
%\end{align*}
%a contradiction.
%\end{proof}

%We are now ready to prove that sequences of many consecutive points given by iterations of a transitive $\Psi$ cannot lie in a proper subspace. 
More precisely, we have the following:

\begin{proposition} \label{theorem_uniformity}
Let $\Psi$ be a transitive automorphism of $\PP^n$. For any $P \in \PP^n$ and any $d \in \set{1, \ldots, n-1}$ there is no $W \in \PGr (d, n)$ such that $\Psi^i (P) \in W$ for all $i \in \set{0, \ldots, d+1}$.
\end{proposition}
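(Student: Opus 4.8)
The plan is to translate the projective statement into a linear-algebra statement about a matrix lift of $\Psi$ and then invoke the irreducibility of its characteristic polynomial provided by Theorem~\ref{transitivity_characterisation}. Concretely, write $\Psi = [M]$ for some $M \in \GL_{n+1}(\F_q)$ and let $v \in \F_q^{n+1} \setminus \{0\}$ be a representative of $P$, so that $M^i v$ represents $\Psi^i(P)$ for every $i \geq 0$. A $d$-dimensional projective subspace $W \in \PGr(d,n)$ is $\PP(\widetilde W)$ for a unique linear subspace $\widetilde W \subseteq \F_q^{n+1}$ of dimension $d+1$, and $\Psi^i(P) \in W$ for all $i \in \{0,\dots,d+1\}$ holds if and only if $v, Mv, \dots, M^{d+1}v \in \widetilde W$. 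Hence the proposition is equivalent to the assertion that the $d+2$ vectors $v, Mv, \dots, M^{d+1}v$ are linearly independent whenever $d+1 \leq n$.

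I would then argue by contradiction: suppose $v, Mv, \dots, M^{d+1}v$ are linearly dependent for some $d$ with $d+1 \leq n$, and let $\ell \leq d+1$ be minimal such that $v, Mv, \dots, M^{\ell}v$ are dependent. Then $M^{\ell}v$ lies in $L := \operatorname{span}(v, Mv, \dots, M^{\ell-1}v)$, and one checks immediately that $L$ is $M$-invariant, nonzero (it contains $v$), and of dimension $\ell \leq n < n+1$; that is, $L$ is a proper nonzero $M$-invariant subspace of $\F_q^{n+1}$. To exclude this, recall that by Theorem~\ref{transitivity_characterisation} the transitivity of $\Psi$ forces $\chi_M(T)$ to be projectively primitive, hence in particular irreducible of degree $n+1$. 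Viewing $\F_q^{n+1}$ as an $\F_q[T]$-module via $M$, its annihilator is generated by an irreducible polynomial, so the module is isomorphic to the field $\F_q[T]/(\chi_M(T))$ and therefore simple; thus $M$ admits no proper nonzero invariant subspace, contradicting the existence of $L$. (Equivalently, the minimal polynomial of $v$ with respect to $M$ divides the irreducible $\chi_M$, hence equals it, so $v, Mv, \dots, M^n v$ are already linearly independent.)

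I do not expect a genuine obstacle here: the argument is short once the dictionary between projective and linear subspaces is in place. The only points requiring care are the bookkeeping — there are $d+2$ iterates indexed by $0,\dots,d+1$, and the hypothesis $d \leq n-1$ is precisely what guarantees $\ell \leq n$ and hence that $L$ is a \emph{proper} subspace — and citing the irreducibility half of Theorem~\ref{transitivity_characterisation} rather than re-deriving it. It is also worth noting explicitly why the claimed $L$ is $M$-invariant ($M \cdot M^j v = M^{j+1} v \in L$ for $j < \ell-1$, while $M \cdot M^{\ell-1}v = M^{\ell}v \in L$ by the choice of $\ell$), since that is the one computation the reader will want spelled out.
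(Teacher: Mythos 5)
Your argument is correct and follows essentially the same route as the paper's proof: both pass to a matrix lift $M$ and a vector representative $v$, take the minimal index at which the iterates $v, Mv, M^2v,\dots$ become dependent, observe that their span is then a proper nonzero $M$-invariant subspace, and contradict the irreducibility of $\chi_M$ guaranteed by Theorem~\ref{transitivity_characterisation}. Your write-up is in fact slightly more careful than the paper's (which phrases the final contradiction as $\chi_M$ having a nontrivial factor of low degree rather than via simplicity of the $\F_q[T]$-module), but the underlying idea is identical.
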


\begin{proof}

Suppose by contradiction that there exists a projective subspace $W$ of dimension $d$ such that there exists $P\in \PP^n$ such that $\Psi^i (P) \in W$ for all $i\in \set{0, \ldots, d+1}$. Let $W'$ be the subspace of $\F_q^{n+1}$ whose projectification is $W$, and let $v \in \F_q^{n+1} \setminus \set{0}$ be a representative for $P$. Let also $M\in \GL_{n+1}(\F_q)$ be a representative for $\Psi$. Consider now the smallest integer $h$ such that $M^h v$ is linearly dependent on $\{M^i v \, : \, i\in \{0,\dots h-1\}\}$ over $\F_q$. Since $M^iv$ is contained in $W'$ for any $i\in \{0,\dots d+1\}$, and $W'$ has dimension $d+1$, then we have that $h$ is at most  $d+1$. Therefore, $M^hv$ can be rewritten in terms of lower powers of $M$, which in turn forces the span of $\{ M^iv \, : \, i\in \{0,\dots, h-1\}\}$ over $\F_q$ to be an invariant space for $M$. It follows that the characteristic polynomial of $M$ has a non-trivial factor of degree less than or equal to $d$. Since $d\leq n$, we have the claim, as by Theorem \ref{transitivity_characterisation} the characteristic polynomial of $M$ has to be irreducible.
\end{proof}

\begin{remark}
This result is  optimal with respect to $d$, as for any set $S$ of $d+2$ points of $\PP^n$ there always exists a $W\in \PGr (d+1, n)$ containing $S$.
\end{remark}

Fix the canonical decomposition $\PP^n = U \cup H$ as in \eqref{decomposition}.

\begin{corollary} \label{bound_affine_jump_index}
Let $\Psi$ be a transitive automorphism of $\PP^n$. For any $P \in U$ the fractional jump index $\mathfrak{J}_P$ of $\Psi$ at $P$ is bounded by $n+1$.
\end{corollary}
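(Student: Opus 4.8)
The plan is to derive the statement directly from Proposition \ref{theorem_uniformity} applied to the hyperplane $H$ at infinity. First I would observe that $H \in \PGr(n-1, n)$, so $H$ is a projective subspace of dimension $d = n-1$, which is precisely the largest value of $d$ allowed in Proposition \ref{theorem_uniformity}. The fractional jump index $\mathfrak{J}_P = \min\{k \geq 1 \,:\, \Psi^k(P) \in U\}$ measures how long the forward orbit of $P$ can stay trapped inside $H$ before returning to $U$; so I want to bound the length of any run of consecutive iterates lying in $H$.

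The key step is the following reindexing. Suppose, for contradiction, that $\mathfrak{J}_P \geq n+2$ for some $P \in U$. Then $\Psi(P), \Psi^2(P), \ldots, \Psi^{n+1}(P)$ all lie in $H$ (they fail to be in $U$ for $k = 1, \ldots, n+1$). Set $Q = \Psi(P)$. Then $Q = \Psi^0(Q), \Psi^1(Q), \ldots, \Psi^{n}(Q)$, i.e. $\Psi^i(Q)$ for $i \in \{0, \ldots, n\}$, all lie in $H$. But $H \in \PGr(n-1, n)$, and with $d = n-1$ the set $\{0, \ldots, d+1\} = \{0, \ldots, n\}$, so this says $\Psi^i(Q) \in H$ for all $i \in \{0, \ldots, d+1\}$ with $W = H$ and $d = n-1$, contradicting Proposition \ref{theorem_uniformity} (which applies since $d = n-1 \in \{1, \ldots, n-1\}$). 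Hence $\mathfrak{J}_P \leq n+1$.

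I should double-check the edge cases. For $n = 1$, Proposition \ref{theorem_uniformity} has an empty range for $d$, so this argument does not apply; but there one sees directly that $\mathfrak{J}_P \leq 2 = n+1$, either from Example \ref{inversive} or because $H$ is a single point $P_0$ which, being an automorphism image, cannot map back into $H$ (a point of $H$ would then have two preimages, as in the proof of Proposition \ref{transitivity_affine_jump_P1}), so a run in $H$ has length at most $1$. For $n \geq 2$ the main argument goes through verbatim.

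The only subtlety — and the step I would be most careful about — is the shift from $P \in U$ to $Q = \Psi(P)$: one needs exactly $n+1$ consecutive iterates of $\Psi$ sitting inside $H$ to contradict Proposition \ref{theorem_uniformity}, and those are $\Psi(P), \ldots, \Psi^{n+1}(P)$, which become the $n+1$ iterates $\Psi^0(Q), \ldots, \Psi^{n}(Q)$ of $Q$; matching this with the indexing $\{0, \ldots, d+1\}$ forces $d = n-1$, which is admissible. Everything else is immediate, and I do not anticipate any genuine obstacle beyond getting this off-by-one bookkeeping right.
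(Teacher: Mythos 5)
Your proof is correct and follows essentially the same route as the paper: assume $\mathfrak{J}_P \geq n+2$, shift to $P' = \Psi(P)$ so that $\Psi^i(P') \in H$ for $i \in \{0,\ldots,n\}$, and apply Proposition \ref{theorem_uniformity} with $W = H \in \PGr(n-1,n)$. Your explicit treatment of the $n=1$ case (where the range of $d$ in Proposition \ref{theorem_uniformity} is empty) is a point the paper leaves implicit, and your direct argument there is valid.
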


\begin{proof}
Assume by contradiction $\mathfrak{J}_P \geq n+2$. Then, setting $P' = \Psi (P)$ we get $\Psi^i (P') \in H$ for all $i \in \set{0, \ldots, n}$. But we have $H \in \PGr (n-1, n)$, and so this violates Proposition \ref{theorem_uniformity}. 
\end{proof}

%%%%%%%%%%%%%%%%%%%%%%%%%%%%%%%%%%%%%%%%%%%%%%%%%%%
%%%%%%%%%%%%%%%%%%%%%%%%%%%%%%%%%%%%%%%%%%%%%%%%%%%

\section{Explicit description of fractional jumps} \label{explicit} 

Let $\Psi$ be an automorphism of $\PP^n$. In this section we will give an explicit description of the fractional jump $\psi$ of $\Psi$.

First of all, fix homogeneous coordinates $X_0, \ldots, X_n$ on $\PP^n$, fix the canonical decomposition $\PP^n = U \cup H$ as in \eqref{decomposition} and the map $\pi$ as in \eqref{pi_definition}, and write $\Psi\in \PGL_{n+1} (\F_q)$ as
\begin{equation*}
\Psi = [F_0: \ldots: F_n],
\end{equation*}
where each $F_j$ is an homogeneous polynomial of degree $1$ in $\F_q [X_0, \ldots, X_n]$.
Fix now affine coordinates $x_1, \ldots, x_n$ on $\A^n$, and for each $j \in \set{1, \ldots, n}$ set
\begin{equation} \label{rational_functions}
f_j (x_1, \ldots, x_n) = \frac{F_{j-1} (x_1, \ldots, x_n, 1)}{F_n (x_1, \ldots, x_n, 1)}.
\end{equation}
Let $K = \F_q (x_1, \ldots, x_n)$ be the field of rational functions on $\A^n$. Then, \eqref{rational_functions} defines elements $f_j \in K$ for $j \in \set{1, \ldots, n}$, and $f_\Psi = (f_1, \ldots, f_n) \in K^n$.
In turn this process defines a map
\begin{equation} \label{map_pgl}
\imath : \PGL_{n+1} (\F_q) \rightarrow K^n, \quad \Psi \mapsto f_\Psi.
\end{equation}
It is easy to see that this map is well defined and for any element $f=(f_1,\dots,f_n)$ in the image  of $\imath$  all the denominators of the $f_j$'s are equal. It also holds that $\imath(\Psi\circ \Phi)=\imath(\Psi)\circ \imath(\Phi)$, where the composition in $K^n$ is defined in the obvious way, i.e. just plugging in the components of $\imath (\Phi)$ in the variables of $\imath(\Psi)$.

Let us go back to $f_\Psi = (f_1, \ldots, f_n) \in K^n$ for a fixed automorphism $\Psi$. For any $i \geq 1$, let us define $f^{(i)} = \imath(\Psi^{i})$. For each $f^{(i)}$ and for each $j\in \{1,\dots,n\}$, write the $j$-th component of $f^{(i)}$ as
\begin{equation*}
f^{(i)}_j = \frac{a^{(i)}_j}{b^{(i)}_j}, \quad \text{for } a^{(i)}_j, b^{(i)}_j \in \F_q [x_1, \ldots, x_n].
\end{equation*}

As we already observed, for fixed $i\geq 1$ all the $b^{(i)}_j$'s are equal, so we can set  $b^{(i)}=b^{(i)}_1$. Define now
\begin{align*}
V_0 &= \A^n, \\
V_i &= \bigcap_{k = 1}^i V(b^{(k)}), \quad \text{for } i \geq 1.
\end{align*}

These sets will be the main ingredient in the definition of the covering mentioned in Section \ref{affine_jumps}. The following result characterises the $V_i$'s in terms of the position of a bunch of iterates of $\Psi$.

\begin{lemma} \label{characterisation_vanishing_loci}
Let $x \in \A^n$, and $P = \pi (x) \in U$. Then, $x \in V_i$ if and only if $\Psi^k (P) \in H$ for $k \in \set{1, \ldots, i}$.
\end{lemma}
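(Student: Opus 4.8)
The plan is to unwind the definitions and track how the homogeneous coordinate $X_n$ (the one that cuts out $H$) transforms under iterated application of $\Psi$. The key observation is that the common denominator $b^{(k)}$ is, up to dehomogenization via $\pi$, precisely the bottom coordinate of $\Psi^k$; so its vanishing at $x$ is equivalent to $\Psi^k(\pi(x))$ landing on $H$. Concretely, write $\Psi^k = [F^{(k)}_0 : \cdots : F^{(k)}_n]$ with each $F^{(k)}_j$ homogeneous linear, so that by definition of $\imath$ and of $f^{(k)} = \imath(\Psi^k)$ we have $b^{(k)}(x_1,\dots,x_n) = F^{(k)}_n(x_1,\dots,x_n,1)$ (after clearing common factors, but since all the $F^{(k)}_j$ are degree-one homogeneous, the denominator is genuinely this linear polynomial evaluated at $(x,1)$, with no cancellation to worry about beyond a global scalar). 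The main point to spell out is then: for $P = \pi(x) = [x_1 : \cdots : x_n : 1]$, one has $\Psi^k(P) = [F^{(k)}_0(x,1) : \cdots : F^{(k)}_n(x,1)]$, and this point lies in $H$ if and only if its last coordinate $F^{(k)}_n(x,1)$ vanishes, i.e. if and only if $b^{(k)}(x) = 0$, i.e. if and only if $x \in V(b^{(k)})$.

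Given that equivalence for each single $k$, the statement follows immediately: by definition $V_i = \bigcap_{k=1}^i V(b^{(k)})$, so $x \in V_i$ if and only if $x \in V(b^{(k)})$ for every $k \in \{1,\dots,i\}$, which by the previous paragraph holds if and only if $\Psi^k(P) \in H$ for every $k \in \{1,\dots,i\}$. I would also note the trivial boundary case $i = 0$, where $V_0 = \A^n$ and the condition on iterates is vacuous, so the equivalence holds trivially.

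The one subtlety worth addressing carefully — and the only place the argument is more than bookkeeping — is the claim that the denominator appearing in $f^{(k)} = \imath(\Psi^k)$ really is $F^{(k)}_n(x,1)$ and not some proper divisor of it. This needs the remark made just before the lemma that $\imath$ is multiplicative, $\imath(\Psi^i) = \imath(\Psi)^{\circ i}$, together with the fact (also recorded in the text) that for every element in the image of $\imath$ all $n$ numerators share the common denominator; combined with $\Psi^k \in \PGL_{n+1}(\F_q)$ being represented by a genuine element of $\GL_{n+1}(\F_q)$, whose rows are linearly independent, the bottom row $F^{(k)}_n$ cannot be a scalar multiple of a "smaller" polynomial in a way that changes its zero locus. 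So the vanishing locus $V(b^{(k)})$ coincides with the zero set of $F^{(k)}_n(x,1)$ regardless of normalization. Once this identification is in place, the rest is the elementary dehomogenization argument above.
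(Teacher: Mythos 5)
Your proposal is correct and follows essentially the same route as the paper: unwind the definition of $V_i$ as $\bigcap_{k=1}^i V(b^{(k)})$ and observe that $b^{(k)}(x)$ is (up to a nonzero scalar) the last homogeneous coordinate of $\Psi^k(P)$, whose vanishing is exactly the condition $\Psi^k(P)\in H$. The extra care you take about the denominator of $\imath(\Psi^k)$ being genuinely $F^{(k)}_n(x,1)$ with no cancellation is a reasonable elaboration of a point the paper treats as immediate, but it does not change the argument.
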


\begin{proof}
By definition, $x \in V_i$ if and only if $x \in V(b^{(k)})$ for every $k \in \set{1, \ldots, i}$, which means $b^{(k)} (x) = 0$ for every $k \in \set{1, \ldots, i}$. Now, $b^{(k)} (x) = 0$ if and only if the last component of $\Psi^k (P)$ is zero, which is equivalent to the condition $\Psi^k (P) \in H$.
\end{proof}

\begin{definition}
Define the \emph{absolute fractional jump index $\mathfrak{J}$ of $\Psi$} to be the quantity
\begin{equation*}
\mathfrak{J} = \max \set{\mathfrak{J}_P \, : \, P \in U}.
\end{equation*}
\end{definition}

When $\Psi$ is transitive, Corollary \ref{bound_affine_jump_index} ensures that $\mathfrak{J} \leq n+1$. We will now show that the absolute jump index equals the number of non empty $V_i$'s. 

\begin{proposition}
We have that
\begin{equation*} \label{absolute_jump_index}
\min \set{i \in \N \, : \, V_i = \emptyset} = \mathfrak{J}.
\end{equation*}
\end{proposition}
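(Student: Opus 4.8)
The plan is to connect the two quantities through Lemma \ref{characterisation_vanishing_loci}, which already translates membership in $V_i$ into a statement about where the iterates $\Psi^k(P)$ land relative to $H$. First I would unwind the definitions: $V_i = \emptyset$ means there is no $x \in \A^n$ with $x \in V_i$; by Lemma \ref{characterisation_vanishing_loci} this says there is no $P \in U$ with $\Psi^k(P) \in H$ for all $k \in \set{1, \ldots, i}$. On the other hand, $\mathfrak{J}_P = \min\set{k \geq 1 \,:\, \Psi^k(P) \in U}$, so $\mathfrak{J}_P \geq i+1$ is precisely the assertion that $\Psi^k(P) \in H$ for all $k \in \set{1, \ldots, i}$. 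Hence $V_i \neq \emptyset$ if and only if there exists $P \in U$ with $\mathfrak{J}_P \geq i+1$, i.e. if and only if $\mathfrak{J} \geq i+1$.

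From this equivalence the proposition follows by a short logical manipulation. Let $i_0 = \min\set{i \in \N \,:\, V_i = \emptyset}$. For every $i < i_0$ we have $V_i \neq \emptyset$, hence $\mathfrak{J} \geq i+1$; taking $i = i_0 - 1$ gives $\mathfrak{J} \geq i_0$. Conversely $V_{i_0} = \emptyset$, so by the equivalence it is \emph{not} the case that $\mathfrak{J} \geq i_0 + 1$, i.e. $\mathfrak{J} \leq i_0$. Combining the two inequalities yields $\mathfrak{J} = i_0$, which is the claim. One should also note that the set $\set{i \in \N \,:\, V_i = \emptyset}$ is nonempty so that $i_0$ is well-defined: the sequence $V_0 \supseteq V_1 \supseteq V_2 \supseteq \cdots$ is decreasing, and since each $\mathfrak{J}_P$ is finite (it is bounded by $o(\Psi)$), the maximum $\mathfrak{J}$ is finite, so $V_{\mathfrak{J}} = \emptyset$ by the equivalence above.

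I do not expect a serious obstacle here; the content is entirely in Lemma \ref{characterisation_vanishing_loci}, and what remains is bookkeeping with the $\min$/$\max$ definitions. The one point to be careful about is the boundary behaviour at $i = 0$: $V_0 = \A^n$ which is nonempty (assuming $q \geq 2$, so $\A^n \neq \emptyset$), consistent with $\mathfrak{J} \geq 1$ since every fractional jump index is at least $1$ by definition; and the vacuous case $i = 0$ in Lemma \ref{characterisation_vanishing_loci} (the condition ``$\Psi^k(P) \in H$ for $k \in \set{1,\ldots,0}$'' being vacuously true) matches ``$x \in V_0$'' being always true. Keeping track of this off-by-one between ``$\mathfrak{J}_P \geq i+1$'' and ``$\Psi^k(P) \in H$ for $k \le i$'' is the only place a sign error could creep in, so I would state the equivalence $V_i \neq \emptyset \iff \mathfrak{J} \geq i+1$ explicitly as the pivot of the argument and verify it carefully against both definitions before concluding.
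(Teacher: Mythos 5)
Your proof is correct and follows essentially the same route as the paper: both reduce the statement to Lemma \ref{characterisation_vanishing_loci} and then verify the two inequalities $i_0 \leq \mathfrak{J}$ (via $V_{\mathfrak{J}} = \emptyset$) and $\mathfrak{J} \leq i_0$ (via $V_{\mathfrak{J}-1} \neq \emptyset$), which you package as the single equivalence $V_i \neq \emptyset \iff \mathfrak{J} \geq i+1$. Your extra remark on the well-definedness of $i_0$ is a harmless addition, not a departure from the paper's argument.
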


\begin{proof}
Set $i_0 = \min \set{i \in \N \, : \, V_i = \emptyset}$. In order to show that $i_0 \leq \mathfrak{J}$, it is enough to prove that $V_{\mathfrak{J}} = \emptyset$. Assume that there exists $x \in V_{\mathfrak{J}}$. Then, if $P = \pi (x)$, we have by Lemma \ref{characterisation_vanishing_loci} that $\Psi^j (P) \in H$ for $j \in \set{1, \ldots, \mathfrak{J}}$, and so the jump index $\mathfrak{J}_P$ must be strictly greater than  $\mathfrak{J} $, a contradiction.

Conversely, in order to show that $\mathfrak J \leq i_0$, it is enough to prove that $V_{\mathfrak J -1}\neq \emptyset $. To do so, take $P_0 \in U$ for which $\mathfrak{J}_{P_0} = \mathfrak{J}$. Then $\Psi^k(P_0)\in H$ for any $k\in \{1,\dots \mathfrak{J}-1\}$. Let $x_0 = \pi^{-1} (P_0)$. Then, by Lemma \ref{characterisation_vanishing_loci} we have $x_0 \in V_{\mathfrak{J}-1}$.
\end{proof}

Define now
\begin{equation*}
U_i = V_{i-1} \setminus V_i, \quad \text{for } i \in \set{1, \ldots, \mathfrak{J}}.
\end{equation*}

Thus, for $I = \set{1, \ldots, \mathfrak{J}}$, the family $\set{U_i}_{i \in I}$ is a disjoint covering of $\A^n$ and each $f^{(i)}$ is a rational map of degree $1$ on $\A^n$. Also, we observe that by construction $f^{(i)}$ is well defined on $U_i$, so that the fractional jump is defined as 
\begin{equation*}
\psi (x) = f^{(i)} (x), \quad \text{if } x \in U_i.
\end{equation*}

To clarify this contruction, we now give an explicit description of a fractional jump over $\A^2$.
\begin{example}
Let $q = 101$ and $n = 2$. Consider the automorphism of $\PP^2$ defined by
\begin{align*}
\Psi([X_0: X_1: X_2]) &= [F_0 : F_1 : F_2] \\
&= [X_0 + 2 X_2: 3 X_1+4 X_2: 4X_0 + 2 X_1 + 3 X_2].
\end{align*}

Notice that
\begin{equation*}
M = \begin{pmatrix} 1 & 0 & 2 \\ 0 & 3 & 4 \\ 4 & 2 & 3 \end{pmatrix}
\end{equation*}
is a representative of $\Psi$ in $\GL_3 (\F_{101})$. The characteristic polynomial $\chi_M (T) \in \F_{101} [T]$ of $M$ is given by
\begin{equation*}
\chi_M (T) = T^3 - 7 T^2 - T + 23,
\end{equation*}
which is irreducible over $\F_{101}$. Now, as
\begin{align*}
\frac{q^{n+1}-1}{q-1} &= \frac{101^3-1}{101-1} \\
&= 10303
\end{align*}
is prime, any irreducible polynomial of degree $3$ in $\F_{101}[T]$ is projectively primitive. By Theorem \ref{transitivity_characterisation} we have that $\Psi$ is transitive on $\PP^n$. Since $n=2$ and $\Psi$ is transitive, by Proposition \ref{absolute_jump_index} and the definition of the $U_i$'s  we know that the 
fractional jump of $\Psi$ will be defined using at most $U_1,U_2,U_3$.

As in \eqref{rational_functions}, we consider rational functions
\begin{align*}
f_1(x_1, x_2) &= \frac{x_1+2}{4x_1+2 x_2 +3}, \\
f_2(x_1, x_2) &= \frac{3x_2+4}{4x_1+2 x_2 +3}.
\end{align*}
in $\F_{101} (x_1, x_2)$, and set $f = (f_1, f_2) \in \F_{101}(x_1, x_2)^2$. Given the definition of $f$, we have
\begin{align*}
V_1 &= V(4x_1+2 x_2 +3), \\
U_1 &= \A^2 \setminus V_1.
\end{align*}

Let now $f^{(1)} = f$ and $f^{(2)} = f \circ f = (f_1^{(2)}, f_2^{(2)}) \in \F_{101}(x_1, x_2)^2$, where
\begin{align*}
f_1^{(2)}(x_1, x_2) &= f_1 (f_1 (x_1, x_2), f_2 (x_1, x_2)) \\
&= \frac{9x_1 + 4x_2 + 8}{16 x_1 + 12 x_2 +25}, \\
f_2^{(2)}(x_1, x_2) &= f_2 (f_1 (x_1, x_2), f_2 (x_1, x_2)) \\
&= \frac{16 x_1 + 17 x_2 + 24}{16 x_1 + 12 x_2 +25}.
\end{align*}
Define
\begin{align*}
V_2 &= V_1 \cap V(16 x_1 + 12 x_2 +25) = \set{(64, 22)}, \\
U_2 &= V_1 \setminus V_2.
\end{align*}

Finally, let $f^{(3)} = f \circ f \circ f = (f_1^{(3)}, f_2^{(3)}) \in \F_{101}(x_1, x_2)^2$, where
\begin{align*}
f_1^{(3)}(x_1, x_2) &= f_1 (f_1^{(2)} (x_1, x_2), f_2^{(2)} (x_1, x_2)) \\
&= \frac{41 x_1 + 28 x_2 - 43}{15 x_1 - 15 x_2 - 47}, \\
f_2^{(3)}(x_1, x_2) &= f_2 (f_1^{(2)} (x_1, x_2), f_2^{(2)} (x_1, x_2)) \\
&= \frac{11 x_1 - 2 x_2 - 30}{15 x_1 - 15 x_2 - 47},
\end{align*}
and $U_3 =V_2= \set{(64, 22)}$, since $V_3=V_2 \cap V(15x_1-15x_2-47)=\emptyset$.

By construction, $\A^2 = U_1 \cup U_2 \cup U_3$, and therefore we are ready to describe the fractional jump $\psi$ of $\Psi$ as

\begin{equation*}
\psi (x_1, x_2) = \begin{cases}
f^{(1)} (x_1, x_2), & \text{if }(x_1, x_2) \in U_1, \\
f^{(2)} (x_1, x_2), & \text{if }(x_1, x_2) \in U_2, \\
f^{(3)} (x_1, x_2), & \text{if }(x_1, x_2) \in U_3.
\end{cases}
\end{equation*}
Notice that $f^{(3)} (64, 22) = (63, 78)$, and so $\psi (x_1, x_2) = (63, 78)$ if $(x_1, x_2) \in U_3 = \set{(64, 22)}$.

\end{example}
 
%%%%%%%%%%%%%%%%%%%%%%%%%%%%%%%%%%%%%%%%%%%%%%%%%%%
%%%%%%%%%%%%%%%%%%%%%%%%%%%%%%%%%%%%%%%%%%%%%%%%%%%

\section{The discrepancy of fractional jump sequences}\label{discrepancy}

In the context of pseudorandom number generation, it is of interest to say something about the distribution of a sequence. A statistic that is of particular interest is the discrepancy of a sequence, of which we recall the  definition below. The goal of this section is to show that for sequences generated by fractional jumps one can prove the same discrepancy bounds as for the sequences generated by the ICG.
For simplicity, we let $q=p$ be prime. We assume the set $ \F_p\cong \Z/p\Z$ to be represented by $\{0,1,\dots, p-1\}\subseteq \Z$ as in \cite{shparlinski10}. For $x\in \F_p$ we then write $\frac{x}{p}$ for the corresponding element in $\frac{1}{p}\Z\subseteq \R$.

For a sequence
\begin{equation*}
\Gamma = \{(\gamma_{m,0},\dots,\gamma_{m,s-1})\}_{m=0}^{N-1}
\end{equation*}
of $N$ points in $[0,1)^s$, for $s\in \N$, the \emph{discrepancy} of $\Gamma$ is defined by
\begin{equation*}
D_\Gamma = \sup_{B\subseteq [0,1)^s} \bigg|\frac{T_\Gamma(B)}{N}-|B|\bigg|,
\end{equation*}
where the supremum is taken over boxes $B$ of the form
\begin{equation*}
B = [\alpha_1,\beta_1)\times \dots \times [\alpha_s,\beta_s) \subseteq [0,1)^s,
\end{equation*}
and $T_\Gamma(B)$ denotes the number of points of $\Gamma$ which lie inside $B$. 

For a sequence $\{u_m\}_{m \in \N}$ of points in $\F_p$ the main interest lies in bounding the discrepancy of the sequence
\[ \Big(\frac{u_{m}}{p},\frac{u_{m+1}}{p},\dots,\frac{u_{m+s-1}}{p}\Big)_{m=0}^{N-1}\]
for $s\ge 1$. In the case of a sequence generated by an ICG such a bound was given in \cite{shparlinski10}. The goal of this section is to extend the results in \cite{shparlinski10} to give discrepancy bounds for full orbit sequences generated by fractional jumps also in the case where the dimension $n$ satisfies $n>1$.

Given a fractional jump $\psi:\F_p^n\to \F_p^n$ and an initial value $x\in \F_p^n$ we define the sequence $\{ \vu_m (x) \}_{m \in \N}$ of points in $\F_p^n$ by setting $\vu_0(x) = x$ and 
\begin{equation*}
\vu_m(x) = \psi^{m}(x), \quad \text{for } m \geq 1.
\end{equation*}
We also define the \emph{snake sequence} $\{ v_m (x) \}_{m \ge 1}$ of points in $\F_p$ by setting
\[ (v_{kn+1}(x),v_{kn+2}(x),\dots,v_{(k+1)n}(x)) = \vu_{k}(x), \quad \text{for } k \in \N.\]
Let $D_{s,\psi}(N;x)$ denote the discrepancy of the sequence
\[ \Big(\frac{v_{m+1}}{p},\frac{v_{m+2}}{p},\dots,\frac{v_{m+s}}{p}\Big)_{m=0}^{N-1}\]
and let $D^n_{s,\psi}(N;x)$ denote the discrepancy of the sequence
\[ \Big(\frac{\vu_m}{p},\frac{\vu_{m+1}}{p},\dots,\frac{\vu_{m+s-1}}{p}\Big)_{m=0}^{N-1}.\]
Note that in the first case the individual points of the sequence lie in $\F_p^s$, while in the second case the points of the sequence lie in $\F_p^{ns}$.

Our main result for the discrepancy $D_{s,\psi}(N;x)$ is a direct generalization of \cite[Theorem 4]{shparlinski10}, which deals with the discrepancy of a sequence generated by an ICG. We also provide the analogous bounds for the $n$-dimensional discrepancy $D_{s,\psi}^n(N;x)$.
\begin{theorem}\label{thm:discrepancy}
Let $\Psi$ be a transitive automorphism of $\PP^n$ and let $\psi$ be its fractional jump. Then for any integer $s\ge 1$ and any real $\Delta >0$, for all but $O(\Delta p^n)$ initial values $x\in \F_p^n$ it holds that
\[ D_{s,\psi}(N;x) \ll_{s,n} (\Delta^{-2/3}N^{-1/3}+ p^{-1/4}\Delta^{-1})(\log N)^s \log p\]
and
\[ D_{s,\psi}^n(N;x) \ll_{s,n} (\Delta^{-2/3}N^{-1/3}+ p^{-1/4}\Delta^{-1})(\log N)^{sn} \log p\]
for all $N$ with $1\le N\le p^n$. 
\end{theorem}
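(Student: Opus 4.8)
The plan is to reduce the discrepancy bound to an exponential sum estimate, following the classical strategy for the ICG (as in \cite{shparlinski10}), but carried out in the projective/fractional-jump setting where the tools of Section \ref{transitivity_projective} and Section \ref{uniformity} replace the ad hoc manipulations available for $n=1$. The first step is the standard Erd\H{o}s--Tur\'an--Koksma inequality: for a sequence of $N$ points in $[0,1)^s$, the discrepancy is controlled by a sum of the form $\frac{1}{H} + \frac{1}{N}\sum_{0<\|\vh\|_\infty\le H}\frac{1}{\prod\max(1,|h_j|)}\bigl|\sum_{m} e_p(\vh\cdot\mathbf{v}_m)\bigr|$, where the inner sum runs over the relevant window of the snake (resp. $n$-dimensional) sequence. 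The $(\log N)^s$ (resp. $(\log N)^{sn}$) factors and the $\log p$ factor will come out of this reduction together with the choice $H\approx p$, so the whole game is to bound the character sum $S(\vh;x)=\sum_{m=0}^{N-1} e_p\bigl(\vh\cdot(\mathbf{u}_m(x),\dots,\mathbf{u}_{m+s-1}(x))\bigr)$ on average over $x$.

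The key step is to express each $\mathbf{u}_{m+t}(x)$, for $t\in\{0,\dots,s-1\}$, as a rational function of $x$ of controlled degree. Here is where transitivity is essential: by Corollary \ref{bound_affine_jump_index} the fractional jump index is bounded by $n+1$, so on each piece $U_i$ of the covering the map $\psi$ agrees with a degree-$1$ rational map $f^{(i)}=\imath(\Psi^i)$, and a bounded number of iterations $\psi^{m},\psi^{m+1},\dots,\psi^{m+s-1}$ is, after refining the covering into at most $((n+1))^{O(s)}$ pieces, given by a single tuple of rational functions whose numerators and denominators have degree $O_{s,n}(1)$. More conceptually, since $\psi^m=\pi^{-1}\Psi^{\mathfrak J}\pi$ for a suitable bounded exponent on each piece, and $\Psi\in\PGL_{n+1}$, the composite is a projective-linear map followed by dehomogenization, hence a rational map of bounded degree; crucially the denominators are nonzero because Proposition \ref{theorem_uniformity} forbids $n+2$ consecutive iterates from lying on the hyperplane $H$. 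One then splits $S(\vh;x)$ according to which piece each of the relevant starting indices falls in; the number of pieces is $O_{s,n}(1)$, so it suffices to bound each sub-sum.

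For each sub-sum one applies a Weil-type bound for exponential sums of rational functions over $\F_p$ (Bombieri, or Cochrane--Pinner), which gives square-root cancellation $\ll_{s,n}\sqrt p$ \emph{for a fixed $x$ ranging over the orbit}; but since we want a bound uniform in the window length $N\le p^n$ and averaged over initial values, the cleaner route is the one used in \cite{shparlinski10}: fix $\vh$, view $x\mapsto S(\vh;x)$ as a function on $\F_p^n$, bound its second (or fourth) moment $\sum_{x}|S(\vh;x)|^2$ by opening the square, using the group structure of the orbit of $\Psi$ on $\PP^n$ (transitivity makes the orbit a full cycle of length $|\PP^n|$) to turn the double sum over $m,m'$ into a single sum, and estimating the resulting complete rational exponential sum over $\F_p^n$ by Deligne/Weil for the affine pieces. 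A Markov/Chebyshev argument then shows that the set of $x$ for which $|S(\vh;x)|$ exceeds $\sqrt p\,\Delta^{-1}$ (roughly) has size $O(\Delta p^n)$; summing the bad sets over the $O(p)$ relevant $\vh$ and re-optimizing $H$ absorbs the loss into the stated exceptional-set size $O(\Delta p^n)$, and feeding the pointwise bound $|S(\vh;x)|\ll_{s,n}\min(N,\;\Delta^{-1}p^{3/4})$-type estimate back into Erd\H{o}s--Tur\'an gives the two terms $\Delta^{-2/3}N^{-1/3}$ and $p^{-1/4}\Delta^{-1}$ after balancing.

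The main obstacle is the control of the rational-function representation: one must check that composing the fractional jump with itself $s$ times, across the finitely many pieces of the covering, yields rational maps whose \emph{denominators do not vanish identically} and whose degrees are genuinely $O_{s,n}(1)$ rather than growing, and that the relevant rational function appearing in the exponent is not itself constant or degenerate (otherwise the Weil bound is vacuous) — this non-degeneracy is exactly what the subspace-uniformity Proposition \ref{theorem_uniformity} is designed to supply, since it prevents the iterates from collapsing into a proper projective subspace where the exponential sum could fail to exhibit cancellation. Handling the boundary pieces $U_i$ with $i\ge 2$, which have size $O(p^{n-1})$ or smaller, is routine: they contribute $O(p^{n-1})$ to any count and are absorbed. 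Once the degree and non-degeneracy bookkeeping is in place, the rest is a direct transcription of the argument in \cite[Theorem 4]{shparlinski10} with $\F_p$ replaced by the orbit of $\Psi$ and with $n$ extra coordinates carried along, which is why the exponents in $(\log N)^{s}$ versus $(\log N)^{sn}$ differ between the two displayed bounds.
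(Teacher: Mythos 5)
Your overall architecture --- Koksma--Sz\"usz, rational-function description of the iterates, a Weil-type bound, a second-moment/Markov argument over initial values --- is the same as the paper's. But there are two genuine gaps. The first is structural: you propose to bound $\sum_{x}|S(\vh;x)|^2$ where $S(\vh;x)$ is the \emph{full} sum over $m=0,\dots,N-1$, turning the double sum over $m,m'$ into sums involving differences of iterates at distance $d=m-m'$, which can be as large as $N\le p^n$. This fails: the rational description $\psi^i=f^{(i)}=\imath(\Psi^i)$ is only valid off the exceptional set $\bigcup_{k\le i}V(b^{(k)})$, whose size grows linearly in $i$ and swamps $\F_p^n$ once $i\gg p$; moreover the non-affineness of $\Psi^i$ (needed so the denominator is a genuine linear form) can only be guaranteed for $i\le p-1$. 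The paper avoids this by \emph{not} squaring the full sum: it first shifts by $k\in\{0,\dots,K-1\}$ at the cost of $O(K^2)$, applies Cauchy--Schwarz in $m$, and only then takes a second moment of the short $k$-sum, so that after the re-centering substitution $x'=\psi^{m}(x)$ the differences involve iterates of depth at most $O(K+s)$; and $K$ is further kept below $p^{1/2}$ by a block decomposition. This shift parameter $K$ is also where the term $\Delta^{-2/3}N^{-1/3}$ comes from (via $K\asymp\Delta^{-2/3}N^{2/3}$), so it is not an optional refinement but the load-bearing part of the argument; your "feed the pointwise bound back in and balance" step has nothing to balance without it.

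The second gap is the non-degeneracy needed for the Bombieri--Weil bound. You attribute it to Proposition \ref{theorem_uniformity}, but that proposition only controls how many \emph{consecutive iterates of a single point} can lie in a proper subspace; it does not rule out that some power $\Psi^i$ fixes the hyperplane at infinity globally, in which case $\imath(\Psi^i)$ is affine and the denominator bookkeeping collapses. The paper needs a separate lemma (Lemma \ref{lem:explicit}) proving that $\imath(\Psi^i)$ is non-affine for all $1\le i\le p-1$, and its proof is an order computation in $\F_p[M]^*/\F_p^*$ using the projective primitivity of the characteristic polynomial from Theorem \ref{transitivity_characterisation}: if $\Psi^i(H)=H$ then the minimal polynomial of $M^i$ has degree $d<n+1$, forcing $i\ge(p^{n+1}-1)/(p^d-1)\ge p$. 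Even granting that, one must still verify for the one-variable Weil application that the leading term $-h_0'x_{j_1}$ survives (outside a set $F$ of $\tilde x$ of size $O_{s,n}(p^{n-2})$) and that the phase is not of the form $h^p-h$; neither check appears in your proposal.
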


The proof of Theorem \ref{thm:discrepancy} follows the same lines as the proof of \cite[Theorem 4]{shparlinski10}, but with Lemma \ref{lem:2nd-moment} below extending \cite[Lemma 1]{shparlinski10} to $n>1$.

In the proofs we will make use of the Koksma--Sz\"{u}sz inequality as well as the Bombieri--Weil bound.

\begin{theorem}[{\cite[Theorem 1.21]{drmota}}]\label{thm:koksma}
For any integer $H\ge 1$, the discrepancy $D_\Gamma$ of the sequence $\Gamma = (\gamma_{m,0},\dots,\gamma_{m,s-1})_{m=0}^{N-1}$ satisfies
\[ D_\Gamma \ll \frac{1}{H} + \frac{1}{N}\sum_{0<\| \vh \|_\infty \le H} \frac{1}{\rho(\vh)} \bigg| \sum_{m=0}^{N-1} \exp\Big( 2\pi i \sum_{j=0}^{s-1} h_j \gamma_{m,j} \Big)\bigg|, \]
where $\rho(\vh) = \prod_{j=0}^{s-1} \max\{|h_j|,1\}$ for $\vh=(h_0,\dots, h_{s-1})\in \Z^s$.
\end{theorem}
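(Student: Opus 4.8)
The plan is to follow the proof of \cite[Theorem 4]{shparlinski10} almost verbatim, the one new ingredient being a second-moment estimate (the Lemma \ref{lem:2nd-moment} referred to above) for the relevant exponential sums, extending \cite[Lemma 1]{shparlinski10} from $n=1$ to arbitrary $n$. I describe the argument for $D^n_{s,\psi}(N;x)$; the bound for $D_{s,\psi}(N;x)$ follows in the same way once the index $m$ is split into residue classes modulo $n$, so that a window of $s$ consecutive entries of $\set{v_m(x)}$ becomes a fixed $\F_p$-linear combination of at most $\lceil s/n\rceil+1$ consecutive vectors $\vu_k(x)$. First, applying the Koksma--Sz\"usz inequality (Theorem \ref{thm:koksma}), for every integer $H\ge 1$ one has
\[
D^n_{s,\psi}(N;x)\ \ll\ \frac1H+\frac1N\sum_{0<\|\vh\|_\infty\le H}\frac{1}{\rho(\vh)}\,\bigl|S_\vh(N;x)\bigr|,\qquad
S_\vh(N;x)=\sum_{m=0}^{N-1}e_p\Bigl(\textstyle\sum_{i=0}^{s-1}\langle\vh_i,\vu_{m+i}(x)\rangle\Bigr),
\]
with $\vh=(\vh_0,\dots,\vh_{s-1})\in(\Z^n)^s$, so it suffices to control $|S_\vh(N;x)|$ for all but $O(\Delta p^n)$ values of $x$. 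For this I would prove Lemma \ref{lem:2nd-moment}, of the shape
\[
\sum_{x\in\F_p^n}\bigl|S_\vh(N;x)\bigr|^2\ \ll_{s,n}\ Np^n+N^2p^{n-1/2}\qquad(0<\|\vh\|_\infty\le H,\ 1\le N\le p^n).
\]

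To prove Lemma \ref{lem:2nd-moment} I would expand the square and use that, $\Psi$ being transitive, $\psi$ is a single $p^n$-cycle on $\F_p^n$ (Remark \ref{remark_transitivity_affine_jump}). Writing $F(z)=\sum_{i=0}^{s-1}\langle\vh_i,\psi^i(z)\rangle$, substituting $y=\psi^{m_2}(x)$ and setting $k=m_1-m_2$ collapses the double sum to
\[
\sum_{x\in\F_p^n}\bigl|S_\vh(N;x)\bigr|^2=\sum_{|k|<N}(N-|k|)\,T_\vh(k),\qquad
T_\vh(k)=\sum_{y\in\F_p^n}e_p\bigl(F(\psi^k(y))-F(y)\bigr).
\]
The term $k=0$ contributes $Np^n$, so everything comes down to proving $|T_\vh(k)|\ll_{s,n}p^{n-1/2}$ for $1\le|k|<N$, \emph{uniformly in $k$}; this uniformity is the heart of the matter and the main obstacle.

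To obtain it I would invoke the material of Sections \ref{transitivity_projective}--\ref{explicit}. Let $M\in\GL_{n+1}(\F_p)$ represent $\Psi$. By the explicit description of Section \ref{explicit}, on the Zariski-open set $G_k=\set{y\in\A^n:\ \Psi^\ell(\pi(y))\in U\text{ for all }1\le\ell\le k+s-2}$ one has $\psi^{k+i}(y)=\pi^{-1}\Psi^{k+i}\pi(y)$ for $0\le i\le s-1$, and this map is simply the dehomogenization of $y\mapsto M^{k+i}(y_1,\dots,y_n,1)$, hence a rational map of degree at most $1$ \emph{whose numerator and denominator degrees do not grow with $k$}. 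Consequently, on $G_k$ the phase $F(\psi^k(y))-F(y)$ agrees with a rational function $R_{\vh,k}\in\overline{\F}_p(y_1,\dots,y_n)$ of degree $O_{s,n}(1)$, uniformly in $k$. One then checks $R_{\vh,k}$ is non-degenerate (not a constant plus a function of the form $g^p-g$, the only obstruction to cancellation): such a degeneracy would be incompatible with the irreducibility of $\chi_M$ provided by Theorem \ref{transitivity_characterisation} (the powers of $M$ span a field of degree $n+1$ over $\F_p$) together with the fact that $\psi^k\ne\Id$ for $1\le k<p^n$. The Bombieri--Weil bound then gives $\bigl|\sum_{y\in G_k}e_p(R_{\vh,k}(y))\bigr|\ll_{s,n}p^{n-1/2}$. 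Finally $\A^n\setminus G_k$ is covered by the $O(k+s)$ affine hyperplanes $\set{y:\Psi^\ell(\pi(y))\in H}$; since $H\in\PGr(n-1,n)$, Corollary \ref{bound_affine_jump_index} (a consequence of Proposition \ref{theorem_uniformity}) bounds the number of consecutive orbit points that can lie in $H$, so on each such hyperplane $\psi^{k+i}$ is again given by bounded-degree rational maps and Bombieri--Weil applies one dimension lower; the resulting $O(p^{n-3/2})$ errors, summed over the $O(k+s)$ hyperplanes and then over $k$, are absorbed into the right-hand side of Lemma \ref{lem:2nd-moment}.

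Given Lemma \ref{lem:2nd-moment}, the rest is the standard deduction of \cite{shparlinski10}. For a threshold to be optimized, Chebyshev's inequality bounds, for each fixed $\vh$, the number of $x$ with $|S_\vh(N;x)|$ above that threshold; summing over the $O(H^{sn})$ admissible $\vh$ produces a ``bad'' set of $x$ of size $O(\Delta p^n)$. For every $x$ outside it, inserting the threshold into the Koksma--Sz\"usz inequality, using $\sum_{0<\|\vh\|_\infty\le H}\rho(\vh)^{-1}\ll(\log H)^{sn}$, and optimizing $H$ yields the claimed bound for $D^n_{s,\psi}(N;x)$: the terms $\Delta^{-2/3}N^{-1/3}$ and $p^{-1/4}\Delta^{-1}$ come respectively from the two summands $Np^n$ and $N^2p^{n-1/2}$ of Lemma \ref{lem:2nd-moment}, and the factor $(\log N)^{sn}$ from the $sn$ scalar components of $\vh$. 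For $D_{s,\psi}(N;x)$ the same computation with $s$ scalar frequencies in place of $sn$ produces the factor $(\log N)^s$. The only genuinely delicate step is the uniform-in-$k$ bound on $T_\vh(k)$ in the third paragraph; everything else is bookkeeping transplanted from the one-dimensional ICG case.
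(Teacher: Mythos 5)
Your proposal does not prove the statement in question. The statement is the Koksma--Sz\"usz (Erd\H{o}s--Tur\'an--Koksma) inequality itself: a bound, valid for every integer $H\ge 1$, on the discrepancy of an \emph{arbitrary} finite sequence $\Gamma$ of points in $[0,1)^s$ in terms of the exponential sums $\sum_{m=0}^{N-1}\exp\bigl(2\pi i\sum_{j=0}^{s-1}h_j\gamma_{m,j}\bigr)$ weighted by $1/\rho(\vh)$. What you have written is instead a sketch of the proof of Theorem \ref{thm:discrepancy} (the discrepancy bound for fractional jump sequences), and in its very first step you \emph{apply} Theorem \ref{thm:koksma} as a known tool; relative to the assigned statement this is circular. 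Nothing in your argument --- transitivity of $\Psi$, Lemma \ref{lem:2nd-moment}, the Bombieri--Weil bound --- can be relevant to the statement, which concerns arbitrary points of $[0,1)^s$ with no arithmetic structure whatsoever.

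A proof of the actual statement would follow the classical Erd\H{o}s--Tur\'an--Koksma argument: approximate the indicator function of a box $B=[\alpha_1,\beta_1)\times\cdots\times[\alpha_s,\beta_s)$ from above and below by trigonometric polynomials with frequencies bounded by $H$ in each coordinate (e.g.\ via the Selberg--Vaaler construction, or a Fej\'er-kernel smoothing of the one-dimensional indicators, then taking products), so that $T_\Gamma(B)-N|B|$ is bounded by $O(N/H)$ plus a sum over $0<\|\vh\|_\infty\le H$ of the corresponding exponential sums with Fourier coefficients of size $O(1/\rho(\vh))$, uniformly in $B$; taking the supremum over boxes yields the stated inequality. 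Be aware, finally, that the paper itself does not prove this theorem but quotes it from Drmota--Tichy, so if your intention was to reprove one of the paper's own results you have targeted the wrong statement; as a proof of Theorem \ref{thm:discrepancy} your outline is broadly in the spirit of the paper's Section \ref{discrepancy}, but that is not what was asked.
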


\begin{theorem}[{\cite[Theorem 2]{moreno}}]\label{thm:weil}
Let $f/g$ be a rational function over $\F_p$ with $\deg(f)>\deg(g)$. Suppose that $f/g$ is not of the form $h^p-h$, where $h$ is a rational function over $\overline{\F}_p$.
Then
\[ \bigg|\sum_{\substack{x\in \F_p:\\g(x)\ne 0}} e_p\left( \frac{f(x)}{g(x)} \right) \bigg| \le (\deg(f) + v-1)p^{1/2},\]
where $v$ is the number of distinct roots of $g$ in $\overline{\F}_p$.
\end{theorem}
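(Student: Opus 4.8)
The final statement is Weil's bound for complete exponential sums of rational functions, in the sharpened form of Moreno--Moreno \cite{moreno}; being classical it is simply quoted here, but the line of proof I would follow is the following. The plan is to realize the sum as a Frobenius trace on the cohomology of an Artin--Schreier sheaf and then invoke the Riemann Hypothesis for curves over finite fields. Fix the nontrivial additive character $e_p$ of $\F_p$, put $R = f/g$, and let $\mathcal{L} = \mathcal{L}_{e_p(R)}$ be the associated lisse rank-one Artin--Schreier sheaf on the affine open $U = \A^1 \setminus V(g) \subseteq \PP^1$; equivalently, consider the degree-$p$ Artin--Schreier cover $\mathcal{C} : y^p - y = R(x)$ of the $x$-line. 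Using the orthogonality relation (the sum $\sum_{a \in \F_p} e_p(ac)$ equals $p$ if $c = 0$ and $0$ otherwise) one writes the number of $\F_p$-points of $\mathcal{C}$ lying above $U$ as $\#U(\F_p)$ plus $\sum_{a \in \F_p^*}$ of the twisted sums $\sum_{x : g(x) \neq 0} e_p(a R(x))$; isolating the single character $a = 1$, which corresponds to the $L$-function of $\mathcal{L}$, and applying the Grothendieck--Lefschetz trace formula gives
\[ \sum_{x \in \F_p,\, g(x) \neq 0} e_p\!\left(\frac{f(x)}{g(x)}\right) = -\operatorname{Tr}\!\big(\mathrm{Frob} \mid H^1_c(U_{\overline{\F}_p}, \mathcal{L})\big), \]
once one knows that $H^0_c(U_{\overline{\F}_p}, \mathcal{L})$ and $H^2_c(U_{\overline{\F}_p}, \mathcal{L})$ vanish.

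Next I would check that the two hypotheses of the theorem are precisely what the cohomological side needs. Since $U$ is an affine curve, $H^0_c = 0$ automatically; the vanishing $H^2_c = 0$ (no trivial geometric quotient of the rank-one sheaf $\mathcal{L}$) is exactly the condition that $R$ is \emph{not} of the form $h^p - h$ for a rational function $h$ over $\overline{\F}_p$, equivalently that $\overline{\F}_p(x)[y]/(y^p - y - R)$ is a field, equivalently that $\mathcal{C}$ is geometrically irreducible. The same condition makes $\mathcal{L}$ geometrically nontrivial, so by Deligne's weight bound (Weil~II), or already by the Riemann Hypothesis for the curve $\mathcal{C}$, the group $H^1_c$ is mixed of weight $\le 1$, i.e.\ every eigenvalue of $\mathrm{Frob}$ on it has absolute value $\le p^{1/2}$. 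The hypothesis $\deg f > \deg g$ is what guarantees that $R$ has a genuine pole at $\infty$, of order $\deg f - \deg g$, so that $\infty$ is among the singularities of $\mathcal{L}$; this is exactly what produces $\deg f$, rather than a quantity controlled by $\deg g$, in the final bound.

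It then remains to bound $\dim H^1_c = -\chi_c(U_{\overline{\F}_p}, \mathcal{L})$. The Euler--Poincar\'e formula (Grothendieck--Ogg--Shafarevich) for a lisse rank-one sheaf on a dense open $U \subseteq \PP^1$ gives $\chi_c(U, \mathcal{L}) = \big(2 - \#(\PP^1 \setminus U)\big) - \sum_{x \in \PP^1 \setminus U} \operatorname{Swan}_x(\mathcal{L})$. Here $\PP^1 \setminus U = V(g) \cup \{\infty\}$ consists of $v + 1$ points; at a root of $g$ of multiplicity $e$ one has $\operatorname{Swan}_x(\mathcal{L}) \le e$, with total over $V(g)$ at most $\deg g$, and $\operatorname{Swan}_\infty(\mathcal{L}) \le \deg f - \deg g$ because the pole order of $R$ at $\infty$ equals $\deg f - \deg g$. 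Hence $\dim H^1_c \le (v + 1 - 2) + \deg g + (\deg f - \deg g) = \deg f + v - 1$, and combining with $|\text{eigenvalues}| \le p^{1/2}$ yields $\big|\sum_{x : g(x) \neq 0} e_p(f(x)/g(x))\big| \le (\deg f + v - 1)\,p^{1/2}$. An equivalent route avoids \'etale cohomology altogether: pass to the smooth projective model $\widetilde{\mathcal{C}}$ of $y^p - y = R(x)$, compute its genus via Riemann--Hurwitz from the wild ramification above the poles of $R$, factor the zeta function of $\widetilde{\mathcal{C}}$ over the characters of the Galois group $\Z/p\Z$, identify the factor attached to $a = 1$ as a polynomial of degree $\deg f + v - 1$, and apply Weil's Riemann Hypothesis for $\widetilde{\mathcal{C}}$ to control its reciprocal roots.

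The part I expect to be genuinely delicate --- and would rather cite than reprove --- is twofold. First, the conductor bookkeeping when $p$ divides a pole order of $R$: there one first carries out an Artin--Schreier reduction, replacing $R$ by an equivalent function whose pole orders are prime to $p$, an operation that can only lower the Swan conductors, so the stated inequality is preserved. Second, and more substantially, isolating the single additive character's $L$-function and establishing that it is pure (of weight $1$, giving reciprocal roots of absolute value $p^{1/2}$): this is where the real input enters, namely Weil's Riemann Hypothesis for curves over finite fields, equivalently Deligne's weight bound.
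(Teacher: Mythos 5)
This theorem is not proved in the paper at all: it is imported verbatim from the cited reference (Theorem 2 of \cite{moreno}, the Bombieri--Weil bound) and used as a black box in the proof of Lemma \ref{lem:technical}, so there is no internal argument to compare yours against. Your sketch is a faithful outline of the standard proof of that cited result: the trace-formula reduction to $H^1_c$ of the Artin--Schreier sheaf (with $H^0_c=0$ on the affine curve and $H^2_c=0$ exactly under the hypothesis that $f/g\ne h^p-h$ over $\overline{\F}_p$), the Grothendieck--Ogg--Shafarevich/Riemann--Hurwitz bookkeeping giving $\dim H^1_c\le (v-1)+\deg g+(\deg f-\deg g)=\deg f+v-1$, and the Riemann Hypothesis for curves supplying the $p^{1/2}$ eigenvalue bound; your flagged caveats (Artin--Schreier reduction when $p$ divides a pole order, and the purity input) are indeed the points one would cite rather than reprove. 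So your treatment is consistent with, and strictly more informative than, what the paper does, which is simply to quote the statement.
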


We will also need to use the explicit description of $\psi$ given in Section \ref{explicit} to describe powers of $\psi$, which is done in the next lemma.

\begin{lemma}\label{lem:explicit}
Let $\Psi$ be a transitive automorphism of $\PP^n$ and let $\psi$ be its
fractional jump. Then there are polynomials $a^{(i)}_j, b^{(i)} \in \F_p [x_1, \ldots, x_n]$ of degree less or equal than $1$, for $i\in\{1,\dots,p-1\}$ and $j\in\{1,\dots,n\}$, with $b^{(i)}$ not
identically a constant, and such that
\[ \psi^i_j(x) = \frac{a^{(i)}_j(x)}{b^{(i)}(x)}, \quad \text{for } x\not\in \bigcup_{k=1}^i V(b^{(k)}),\]
where $\psi^i_j (x)$ denotes the $j$-th component of $\psi^i (x)$.
\end{lemma}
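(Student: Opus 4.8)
The plan is to combine the explicit description of the fractional jump $\psi$ from Section \ref{explicit} with the composition law $\imath(\Psi\circ\Phi)=\imath(\Psi)\circ\imath(\Phi)$ for the map $\imath$ of \eqref{map_pgl}, together with the transitivity bound on the absolute jump index. The starting observation is that on each piece $U_k$ of the covering $\set{U_1,\dots,\mathfrak J}$ of $\A^n$ we already know $\psi(x)=f^{(k)}(x)=\imath(\Psi^k)(x)$, a degree-one rational map whose single common denominator is $b^{(k)}$; by Corollary \ref{bound_affine_jump_index} we have $\mathfrak J\le n+1\le p$ (for $p$ large; small $p$ can be absorbed into the constant, or noted separately), so finitely many such pieces suffice. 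The point of the lemma is to iterate this: composing $i$ maps each of which is piecewise a degree-one fractional transformation still yields, on a suitable piece, a single degree-one fractional transformation, because the composite corresponds under $\imath$ to some power $\Psi^{m}$ of $\Psi$, and $\imath(\Psi^m)$ is again a tuple of degree-one rational functions with a common denominator.

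The key steps, in order, would be: (1) Recall from Section \ref{explicit} that for every $m\ge 1$, $\imath(\Psi^m)=f^{(m)}$ is a degree-one rational map on $\A^n$ whose $n$ components share a common denominator; call a choice of numerators $a^{(m)}_j$ and denominator $b^{(m)}$, extracted from the (degree-one homogeneous) entries of a matrix representing $\Psi^m$. (2) Fix $i\in\set{1,\dots,p-1}$ and a point $x$ outside $\bigcup_{k=1}^i V(b^{(k)})$; unwind the definition of $\psi^i$ as $i$-fold composition $\psi\circ\cdots\circ\psi$. Using $\psi(y)=f^{(\mathfrak J_{\pi(y)})}(y)=\pi^{-1}\Psi^{\mathfrak J_{\pi(y)}}\pi(y)$, an induction on $i$ shows $\psi^i(x)=\pi^{-1}\Psi^{m_i(x)}\pi(x)$ where $m_i(x)=\sum_{t=0}^{i-1}\mathfrak J_{\pi(\psi^t(x))}$ is the accumulated jump index; in particular $m_i(x)\ge i$, and $\Psi^{m_i(x)}(\pi(x))\in U$. (3) Show that on the set where $x\notin\bigcup_{k=1}^i V(b^{(k)})$ one in fact has $m_i(x)=i$: indeed $x\notin V(b^{(1)})$ forces $\Psi(\pi(x))\in U$ by Lemma \ref{characterisation_vanishing_loci}, hence $\psi(x)=f^{(1)}(x)$ and $\mathfrak J_{\pi(x)}=1$; inductively, $x\notin V(b^{(k)})$ means the last coordinate of $\Psi^k(\pi(x))$ is nonzero, i.e. $\Psi^k(\pi(x))\in U$, so each jump index encountered along the orbit of $x$ up to step $i$ equals $1$ and the composite is literally $\imath(\Psi^i)$ evaluated at $x$. (4) Conclude $\psi^i_j(x)=f^{(i)}_j(x)=a^{(i)}_j(x)/b^{(i)}(x)$ with the required degree bound, and note $b^{(i)}$ is not identically constant because $\Psi$ is transitive, hence in particular not affine (if $b^{(i)}$ were constant, $\imath(\Psi^i)$ would be an affine map, making $\Psi^i$ fix $H$, contradicting transitivity via Proposition \ref{theorem_uniformity} or directly).

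I would expect the main obstacle to be step (3): carefully verifying that avoiding the vanishing loci $V(b^{(k)})$ for $k\le i$ is exactly the condition that makes the $i$-fold composition of the piecewise map collapse to the single branch $\imath(\Psi^i)$, i.e. reconciling the two descriptions of $\psi^i$ — one as composition of the piecewise-defined $\psi$, the other as $\imath(\Psi^i)$ on the complement of the bad loci. The subtlety is bookkeeping the jump indices along the orbit and matching Lemma \ref{characterisation_vanishing_loci} (which is stated for a single point and its forward $\Psi$-iterates) against the forward $\psi$-iterates; once one notes that on the good set the $\psi$-orbit and the $\Psi$-orbit coincide (no jumps are skipped), everything lines up. The degree-one claim and the non-constancy of $b^{(i)}$ are then immediate from the Section \ref{explicit} machinery and transitivity, so the essential content is this identification, which I would phrase as a short induction.
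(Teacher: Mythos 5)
Your steps (1)--(3) match the paper's proof: the functions $a^{(i)}_j,b^{(i)}$ are exactly the components of $\imath(\Psi^i)$, and the identification of the $i$-fold composite $\psi^i$ with the single branch $f^{(i)}=\imath(\Psi^i)$ on the complement of $\bigcup_{k=1}^i V(b^{(k)})$ is precisely the bookkeeping you describe (indeed you spell it out more carefully than the paper does, via Lemma \ref{characterisation_vanishing_loci} and the observation that every jump index along the orbit equals $1$ on the good set).

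The gap is in your step (4). You dismiss the non-constancy of $b^{(i)}$ with ``if $b^{(i)}$ were constant, $\Psi^i$ would fix $H$, contradicting transitivity via Proposition \ref{theorem_uniformity} or directly,'' but a power of a transitive map can perfectly well fix $H$: with $N=(p^{n+1}-1)/(p-1)$ one has $\Psi^N=\Id$, which fixes $H$, so ``$\Psi^i(H)=H$'' does not contradict transitivity of $\Psi$ for general $i$. Proposition \ref{theorem_uniformity} does not apply either, since it constrains \emph{consecutive} iterates $P,\Psi(P),\dots,\Psi^{d+1}(P)$, whereas $\Psi^i(H)=H$ only tells you about the iterates spaced $i$ apart. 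The point of the hypothesis $i\in\{1,\dots,p-1\}$ --- which your proposal never uses for this purpose --- is exactly to rule this out, and the argument is the technical core of the lemma: if $\Psi^i(H)=H$ then $M^i$ fixes a proper subspace of $\F_p^{n+1}$, so (since $\F_p[M]$ is a field by irreducibility of $\chi_M$, hence $\F_p[M^i]$ is a subfield) the minimal polynomial of $M^i$ is irreducible of some degree $d<n+1$; then $[M^i]^{(p^d-1)/(p-1)}=[1]$ in $\F_p[M]^*/\F_p^*$, while $[M]$ generates this group of order $(p^{n+1}-1)/(p-1)$, forcing $(p^{n+1}-1)/(p-1)\mid i\,(p^d-1)/(p-1)$ and hence $i\ge (p^{n+1}-1)/(p^d-1)\ge p$, a contradiction. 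Without this order computation the lemma is unproved, and the non-constancy of $b^{(i)}$ is not a cosmetic detail: it is what makes the Weil-type bound in Lemma \ref{lem:technical} applicable. (As a minor further point, your aside ``$\mathfrak J\le n+1\le p$ for $p$ large; small $p$ can be absorbed into the constant'' is out of place here: the lemma is an exact algebraic statement with no implied constants, and the inequality $n+1\le p$ is neither needed nor generally true.)
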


\begin{proof}
The functions $a_j^{(i)}, b^{(i)}$ are defined as in Section \ref{explicit}. Indeed, recall that there is a set $U_1$ and there is a rational map
\[ f^{(1)}=\Big(\frac{a^{(1)}_1}{b^{(1)}},\dots,\frac{a^{(1)}_n}{b^{(1)}}\Big) \]
of degree $1$ such that
\[ \psi(x) = f^{(1)}(x),\quad \text{for } x\in U_1 = \F_p^n\setminus V(b^{(1)}).\]
For $i\in \{1,\dots,p-1\}$ and $j\in\{1,\dots,n\}$, define the maps $a_j^{(i)},b^{(i)}$ by iterating the map $f^{(1)}$, that is
\[ f^{(i)} = (f^{(1)})^i = \left(\frac{a^{(i)}_1}{b^{(i)}},\dots,\frac{a^{(i)}_n}{b^{(i)}}\right),\quad i\in\{1,\dots,p-1\}.\]
Let us notice that in Section \ref{explicit} the function $f^{(i)}$ was used to describe the map $\psi$ on the set $U_i$. In this section we are instead using $f^{(i)}$ to describe the $i$'th iterate of $\psi$ on the set $U_1\cap \psi^{-1}(U_1) \cap \cdots \cap \psi^{-i+1}(U_1)$. In particular, Section \ref{explicit} made use of $f^{(i)}$ for $i\in\{1,\dots,n\}$, but here we instead use $f^{(i)}$ on the range $i\in \{1,\dots,p-1\}$.

To see that $b^{(i)}$ isn't identically a constant for $i \in \set{1, \ldots, p-1}$ we need to show that $f^{(i)} = \imath(\Psi^i)$, where $\imath$ is the map in \eqref{map_pgl}, is not affine. Let $M\in \GL_{n+1}(\F_p)$ be such that $\Psi = [M] \in \PGL_{n+1}(\F_p)$. 
Suppose by contradiction that $\imath(\Psi^i)$ is affine for some $i \in \set{1, \ldots, p-1}$. Since $M$ has irreducible characteristic polynomial by Theorem \ref{transitivity_characterisation}, we have that  $\F_p[M]$ is a field and in turn that $\F_p[M^i]$ is a proper subfield, therefore the minimal polynomial of $M^i$ is irreducible. Now, since $\imath(\Psi^i)$ is assumed to be affine, we have that  $\Psi^i(H)=H$, which in turn forces $M^i$ to fix a proper subspace of $\F_p^{n+1}$. This directly implies that the (irreducible) minimal polynomial of $M^i$ cannot be equal to the characteristic polynomial of $M^i$, and therefore it must have degree $d<n+1$.

We know, again by Theorem \ref{transitivity_characterisation}, that $[M]$ is a generator for the quotient group $\F_p[M]^*/ \F_p^*$ as $\Psi$ is transitive, but $[1]=[M^i]^{(p^d-1)/(p-1)}= [M]^{i (p^d-1)/(p-1)}$, so $(p^{n+1}-1)/(p-1)\mid i (p^d-1)/(p-1)$ which forces $i\geq 
(p^{n+1}-1)/(p^d-1)\geq p$, a contradiction.
\end{proof}

We are now ready to prove the technical heart of the argument.

\begin{lemma}\label{lem:technical}
Let $\Psi$ be a transitive automorphism of $\PP^n$ and let $\psi$ be its fractional jump. Then for any integers $j_0,
s\ge 1$, $d\le (p-1)n-s$ and $\vh \in \F_p^s\setminus \{0\}$ it holds that
\[ \Big|\sum_{x \in \F_p^n} e_p\Big( \sum_{j=0}^{s-1}h_j(v_{j_0+d+j}(x)-v_{j_0+j}(x))\Big)\Big| \le 3\Big(\frac{s+d}{n}+1\Big)p^{n-1}+4\Big(\frac{s}{n}+1\Big)p^{n-1/2}.
 \]
\end{lemma}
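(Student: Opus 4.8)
The plan is to reduce the character sum over $\F_p^n$ to a product of one-dimensional complete exponential sums and then apply the Bombieri--Weil bound (Theorem \ref{thm:weil}). First I would unwind the snake sequence: the points $v_{j_0+j}(x)$ and $v_{j_0+d+j}(x)$ are, by definition, coordinates of iterates $\vu_k(x) = \psi^k(x)$ for various $k$ in a range of length roughly $(s+d)/n$. Writing $j_0 = k_0 n + r_0$, each $v_{j_0+j}$ is the $(\,\cdot\,)$-th coordinate of some $\psi^{k}(x)$ with $k$ ranging over an interval of at most $\lceil s/n\rceil + 1$ values, and similarly for the shifted block, which ranges over at most $\lceil (s+d)/n\rceil + 1$ values. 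So the exponent is a fixed $\F_p$-linear combination of coordinates of $\psi^{k}(x)$ for $k$ in a set $S$ with $|S| \le (s+d)/n + 2$ or so; by Lemma \ref{lem:explicit}, on the complement of $\bigcup_{k\le \max S} V(b^{(k)})$ each such coordinate is $a^{(k)}_j(x)/b^{(k)}(x)$ with $a^{(k)}_j, b^{(k)}$ of degree $\le 1$ and $b^{(k)}$ non-constant. Hence, after clearing denominators by the common multiple $B(x) = \prod_{k\in S} b^{(k)}(x)$, the exponent becomes $F(x)/B(x)$ for a polynomial $F$ of degree $\le |S|$, valid off $V(B)$.

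Next I would estimate the contribution of the ``bad'' set $V(B) = \bigcup_{k\in S} V(b^{(k)})$ trivially: each $V(b^{(k)})$ is an affine hyperplane (since $b^{(k)}$ is degree $1$ and non-constant), so $|V(B)| \le |S|\, p^{n-1} \le \big(\frac{s+d}{n}+\text{const}\big)p^{n-1}$, which will feed the first term of the claimed bound. For the ``good'' set I would write $\F_p^n \setminus V(B)$ and slice along $n-1$ of the variables: fixing $x_2,\dots,x_n$ generically, the inner sum over $x_1$ is a complete sum $\sum_{x_1 : g(x_1)\ne 0} e_p(f(x_1)/g(x_1))$ of a univariate rational function whose numerator degree exceeds its denominator degree (one must be slightly careful and possibly slice along whichever variable genuinely appears; transitivity of $\Psi$ together with Proposition \ref{theorem_uniformity} / the fact that the $b^{(k)}$ define distinct hyperplanes should guarantee a good choice). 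One must also check the non-degeneracy hypothesis of Theorem \ref{thm:weil}, namely that $f/g$ is not of the form $h^p - h$: this is where the hypothesis $\vh \ne 0$ and $d \le (p-1)n - s$ enters, ensuring the rational function is genuinely non-constant and of controlled degree $< p$, so it cannot be an Artin--Schreier form. Theorem \ref{thm:weil} then gives a bound $(\deg f + v - 1)p^{1/2} \ll (s/n + 1)p^{1/2}$ for the inner sum (the shift block cancels against the base block in the denominator count, leaving only $O(s/n)$ worth of poles), and summing trivially over the $\le p^{n-1}$ choices of $x_2,\dots,x_n$ produces the second term $\big(\frac{s}{n}+1\big)p^{n-1/2}$. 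Tracking the explicit constants $3$ and $4$ is then a bookkeeping matter of counting hyperplanes and distinct roots.

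The main obstacle, and the step deserving the most care, is the verification that after clearing denominators the relevant univariate rational function $f/g$ genuinely has $\deg f > \deg g$ and is not of Artin--Schreier type $h^p - h$ — i.e. that the exponent does not degenerate for the particular linear combination $\vh \ne 0$ and the particular shift $d$. This requires using the structure from Lemma \ref{lem:explicit} (that $f^{(k)} = \imath(\Psi^k)$ is non-affine for $k < p$, hence genuinely has a non-constant denominator and the coordinates are honestly fractional-linear) together with the algebraic independence coming from transitivity of $\Psi$: distinct iterates $\psi^{k}$ correspond to distinct powers of $M$, whose action cannot be made to collapse modulo $p$-th powers in the range $d \le (p-1)n - s$. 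A secondary technical point is ensuring the denominators $b^{(k)}$ for the different $k\in S$ are not all proportional, so that $B$ has enough distinct roots to make the numerator degree strictly dominate — again this follows from the subspace-uniformity of Proposition \ref{theorem_uniformity} applied to $H$. Once these non-degeneracy facts are in place, the rest is a routine slicing argument combined with Theorem \ref{thm:weil} and the trivial hyperplane count.
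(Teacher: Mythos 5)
Your outline follows the same general strategy as the paper (slice off one variable, apply Theorem \ref{thm:weil} to the inner univariate sum, count the bad hyperplanes trivially), but there is a genuine gap at exactly the step you flag as the main obstacle, and the fix you gesture at is not the right mechanism. After combining the terms $h_j\bigl(v_{j_0+d+j}-v_{j_0+j}\bigr)$ over the common denominator $B=\prod_k b^{(k)}$, every constituent $a^{(k)}_j/b^{(k)}$ has numerator and denominator both of degree at most $1$, so the resulting numerator has degree at most $\deg B$ in the sliced variable: the hypothesis $\deg f>\deg g$ of Theorem \ref{thm:weil} simply fails in general, and no appeal to transitivity, to Proposition \ref{theorem_uniformity}, or to the denominators being non-proportional will create the missing extra degree. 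The paper's key maneuver, which your proposal omits, is to first use that $\psi$ is a bijection and substitute $x'=\psi^{m}(x)$ with $m=\floor{(j_0+r)/n}$, where $r$ is the index of the first nonzero $h_j$. This re-indexes the sum so that the first nonzero coefficient $h_0'$ multiplies a coordinate of the \emph{zeroth} iterate, i.e.\ a plain affine coordinate $x_{j_1}$, and the exponent becomes $\tilde a/b - h_0'x_{j_1}+c(\tilde x)$; the explicit linear monomial is what forces $\deg(\mathrm{numerator})=\deg(\mathrm{denominator})+1$. Even then one must exclude the set $F$ of slices $\tilde x$ for which the leading coefficient of $\tilde a/b$ cancels $-h_0'x_{j_1}$, bound $|F|\le 2(s'/n+1)p^{n-2}$, and pay $p|F|$ in the error term; this exceptional set is a substantive ingredient (it accounts for part of the constant $3$ in the first term), not mere bookkeeping.

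Two smaller corrections. First, your attribution of the non--Artin--Schreier verification to the hypotheses $\vh\ne 0$ and $d\le(p-1)n-s$ is off: in the paper the role of $d\le(p-1)n-s$ is only to keep all iteration indices below $p$ so that Lemma \ref{lem:explicit} applies, while the Artin--Schreier exclusion uses the already-established relation $\deg a=\deg b+1$ together with $\deg b<p$, the latter coming from a preliminary reduction to the case $s\le p^{1/2}n$ (the bound being trivial otherwise) — a reduction your proposal does not make but needs. Second, the first error term $3\bigl(\tfrac{s+d}{n}+1\bigr)p^{n-1}$ is not just the union bound over the hyperplanes $V(b^{(k)})$; it also absorbs the contribution $p|F|$ from the degenerate slices, so your accounting of the constants would not close without the set $F$.
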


\begin{proof}
Observe first that the result is trivial for $s \ge p^{1/2}n$, so assume that $s\le p^{1/2}n$.

Let $r = \min\{j: h_j\ne 0\}$, $s'=s-r$ and $h_j'=h_{j+r}$ for $j\in\{0,\dots,s'-1\}$. Let $m= \floor{(j_0+r)/n}$. Since $\psi$ is a bijection, we can make the substitution $x'=\psi^m (x)$ and sum over $x'\in\F_p^n$ in place of summing over $x\in \F_p^n$. Then, we get $v_{i}(x')=v_{mn+i}(x) $, and so
\begin{align*}
\sum_{x\in\F_p^n} e_p\Big(\sum_{j=0}^{s-1} h_j(v_{j_0+d+j}(x)-v_{j_0+j}(x))\Big) &= \sum_{x'\in\F_p^n} e_p\Big(\sum_{j=0}^{s'-1} h_{j}'(v_{j_1+d+j}(x')-v_{j_1+j}(x'))\Big)\\
&=\sum_{x\in\F_p^n} e_p\Big(\sum_{j=0}^{s'-1} h_{j}'(v_{j_1+d+j}(x)-v_{j_1+j}(x))\Big),
\end{align*}
for some $j_1$ with $1\le j_1 \le n$, where in the last equality we have simply relabeled the summation index $x'$ to $x$, which we do for simplicity of notation. Notice that in this way we have that $h_0' \ne 0$, which was the entire point of shifting the sum.

As $d\le (p-1)n-s$ we have $j_1+j \le j_1+d+j \le
pn-1 < pn$. This means that $v_{j_1+d+j} = \psi_k^i(x)$ for some $i< p$ and some
$k\in\{1,\dots, n\}$.  An analogous statement also holds for $v_{j_1+j}$, i.e. $v_{j_1+j} = \psi_{k'}^{i'}(x)$ for some $i'< p$ and some $k'\in\{1,\dots, n\}$.
 We can therefore apply Lemma \ref{lem:explicit} to write
\[v_{in+j}(x) = \psi_j^i(x) = \frac{a_j^{(i)}(x)}{b^{(i)}(x)},\quad x\not\in \bigcup_{k=1}^i V(b^{(k)}),\]
for $i\in\{1,\dots,\floor{\frac{n+d+s-1}{n}}\}$, $j\in \{1,\dots,n\}$, and for $i=0$ we clearly have
\[ v_{j}(x) = x_j,\]
for $j \in \{1,\dots,n\}$.

Since we want to estimate the sum 
\[\Big|\sum_{x \in \F_p^n} e_p\Big( \sum_{j=0}^{s'-1}h_j'(v_{j_0+d+j}(x)-v_{j_0+j}(x))\Big)\Big|,\]
we consider for fixed $\tilde{x} = (x_1,\dots,x_{j_1-1},x_{j_1+1},\dots,x_n)\in \F_q^{n-1}$ the inner sum
\[G(x_{j_1};\tilde{x})= \sum_{j=0}^{s'-1} h_j'(v_{j_1+d+j}(x)-v_{j_1+j}(x))\]
as a function of the variable $x_{j_1}$. Since we want to apply Theorem \ref{thm:weil} to $G$ for fixed $\tilde x$ (and considered as a univariate polynomial in $x_{j_1}$) we first need to give a nice description of $G$ outside a certain set. We can do that outside of the set
\[E=\bigcup_{i=1}^{\floor{(n+d+s-1)/n}}V(b^{(i)}).\]
In fact, one may write
\[G(x_{j_1};\tilde{x})= \frac{a(x_{j_1};\tilde{x})}{b(x_{j_1};\tilde{x})} = \frac{\tilde{a}(x_{j_1};\tilde{x})}{b(x_{j_1};\tilde{x})} - h_0' x_{j_1} + c(\tilde{x}),\]
where $a,\tilde{a},b$ are polynomials and $c(\tilde{x})$ is constant with respect to $x_{j_1}$.

In order to apply Theorem \ref{thm:weil} to the sum over $x_{j_1}$, we need to check that the conditions of the theorem are verified apart from a small set $F$ of $\tilde x$'s, whose size we can estimate. To begin with we check that $\deg(a) = \deg(b)+1$. This follows immediately if either $\deg(\tilde{a})\le \deg(b)$, or if $\deg(\tilde{a})=\deg(b)+1$ and the leading coefficient in $\tilde{a}/b$ doesn't cancel the term $-h_0'x_{j_1}$. 

By considering the possible powers of $\psi$ that can appear in the definition of $G$, we see that 
\[b(x_{j_1};\tilde{x}) = \prod_{i\in I} b^{(i)}(x),\]
with the product taken over a set $I$ of $i$ satisfying
\begin{equation}
i \in \left[ \frac{j_1}{n}-1,\frac{j_1+s'-1}{n} \right)\cup\left[ \frac{j_1+d}{n}-1,\frac{j_1+d+s'-1}{n} \right) \subseteq [0, p)
\label{eq:ugly}
\end{equation}
and such that the coefficient of $x_{j_1}$ is nonzero in $b^{(i)}$.
 
Since $\tilde{a}/b$ was defined by a linear sum of rational functions of degree $1$, it follows that $\deg(\tilde{a}) \le \deg(b) + 1$. If $\deg(\tilde{a}) = \deg(b)+1$, the coefficient of the highest order term in $a$ is of the form a constant times $\prod_{i\in J} b^{(i)}(x)$ for some set $J$ of $i$ satisfying \eqref{eq:ugly} and such that $b^{(i)}$ doesn't depend on $x_{j_1}$. Think of this coefficient as a polynomial in $\tilde{x}$. In particular, there are no more than $2 \Big(\frac{s'}{n}+1 \Big)$ values of $\tilde{x}$ satisfying \eqref{eq:ugly}, and therefore the coefficient is equal to $h_0'$ for at most $2 \Big( \frac{s'}{n}+1 \Big) p^{n-2}$ values of $\tilde{x}$. We can therefore define a set $F \subseteq \F_p^{n-1}$ with
\[ |F| \le 2\Big(\frac{s'}{n}+1\Big)p^{n-2},\]
and such that $\deg(a) = \deg(b) + 1$~for $\tilde{x} \not \in F$.

Finally, for $\tilde{x}\not\in F$ we want to check that $G$ is not of the form $h^p-h$ for some rational function $h$ over $\overline{\F}_p$. Assume therefore that in fact $a/b = h^p-h$ for some rational function $h = h_1/h_2$, where $h_1$ and $h_2$ are coprime. Then ${h_2^pa = h_1^p b - h_1bh_2^{p-1}}$, and so in particular $h_2^p | b$. Note that
\[ \deg(b) \le 2(s'/n+1) < p \]
since we initially assumed that $s\le p^{1/2}n$, and so $h_2$ must be constant. This gives $a = b(c_1h_1^{p} - c_2h_1)$ for some constants $c_1,c_2$. But then $\deg(a) -\deg(b)$ is a multiple of $p$, contradicting $\deg(a) = \deg(b)+1$.

Combining all of this we may apply Theorem \ref{thm:weil} to the sum over $x_{j_1}$ to conclude that whenever $\tilde{x}\not\in F$ it holds that
\[\bigg|\sum_{x_{j_1}} e_p\Big(\frac{a(x_{j_1};\tilde{x})}{b(x_{j_1};\tilde{x})}\Big)\bigg| \le 4\Big(\frac{s}{n}+1\Big)p^{1/2},\]
where the sum is taken over values $x_{j_1}$ where $b\ne 0$. For $\tilde{x}\in F$ we have the trivial bound
\[\bigg|\sum_{x_{j_1}} e_p\Big(\frac{a(x_{j_1};\tilde{x})}{b(x_{j_1};\tilde{x})}\Big)\bigg| \le p.\]

Finally, these bounds together with the union bound 
\[ |E| \le \sum_{i=0}^{\floor{(n+d+s-1)/n}} |V(b^{(i)})| \le \left(\frac{s+d}{n}+1\right)p^{n-1}\]
and the triangle inequality give
\begin{align*}
\Big|\sum_{x\in\F_p^n} e_p(G(x_{j_1};\tilde{x}))\Big| &\le |E| + \Big|\sum_{x\not\in E} e_p\Big(\frac{a(x_{j_1};\tilde{x})}{b(x_{j_1};\tilde{x})}\Big)\Big| \\
    &\le |E| + p|F| + \Big|\sum_{\tilde{x}\not\in F}\sum_{\substack{x_{j_1}:\\ x\not\in E}} e_p\Big(\frac{a(x_{j_1};\tilde{x})}{b(x_{j_1};\tilde{x})}\Big)\Big|\\
    &\le \Big(\frac{s+d}{n}+1\Big)p^{n-1} + 2p\Big(\frac{s}{n}+1\Big)p^{n-2} + 4\Big(\frac{s}{n}+1\Big)p^{1/2}p^{n-1}\\
    &\le 3\Big(\frac{s+d}{n}+1\Big)p^{n-1}+4\Big(\frac{s}{n}+1\Big)p^{n-1/2}.
\end{align*}
\end{proof}

We now need an additional ancillary result, which will be used in the proof of the main theorem.

\begin{lemma}\label{lem:2nd-moment}
Let $\Psi$ be a transitive automorphism of $\PP^n$ and let $\psi$ be its fractional jump. Then for any integers $j_0, s \geq 1$ and $K$ with $1\le K \le p^n$, and any $\vh\in \F_p^s\setminus \{0\}$ one has
\[ \sum_{x\in \F_p^n} \bigg| \sum_{k=0}^{K-1} e_p\Big(\sum_{j=0}^{s-1}h_jv_{j_0+j+k}(x)\Big)\bigg|^2 \ll_{s,n} Kp^n + K^2p^{n-1/2}. \]
\end{lemma}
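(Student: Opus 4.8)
The plan is to expand the square into a diagonal part, which will yield the term $Kp^n$, and an off-diagonal part, which I will bound using Lemma \ref{lem:technical}. The catch is that Lemma \ref{lem:technical} is only efficient when the shift $d=|k_1-k_2|$ has size $O(\sqrt p)$: for such $d$ it gives $\ll_{s,n}p^{n-1/2}$, but for larger $d$ the term $\tfrac{s+d}{n}p^{n-1}$ becomes too large once summed over the $\sim K^2$ pairs, and for $d>(p-1)n-s$ the lemma is not even available while $d$ can be as large as $K-1\sim p^n$. To get around this I would first split $\sum_{k=0}^{K-1}$ into $t=\lceil K/L\rceil$ consecutive blocks of length at most $L=\min\{K,\lceil\sqrt p\rceil\}$ and apply the Cauchy--Schwarz inequality, so that within each block every off-diagonal shift satisfies $d\le L-1\le\sqrt p$.

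First I would dispose of small $p$: if $p$ lies below a threshold depending on $s$ and $n$, the estimate is immediate from $\sum_{x\in\F_p^n}\big|\sum_{k=0}^{K-1}e_p(\cdots)\big|^2\le p^nK^2\ll_{s,n}K^2p^{n-1/2}$, so I may assume $p$ large enough that $\lceil\sqrt p\rceil\le(p-1)n-s$. Writing $S_k(x)=\sum_{j=0}^{s-1}h_jv_{j_0+j+k}(x)$ and partitioning $\{0,\dots,K-1\}$ into blocks $B_1,\dots,B_t$ each of size at most $L$, Cauchy--Schwarz gives
\[ \sum_{x\in\F_p^n}\Big|\sum_{k=0}^{K-1}e_p(S_k(x))\Big|^2\le t\sum_{\nu=1}^{t}\sum_{x\in\F_p^n}\Big|\sum_{k\in B_\nu}e_p(S_k(x))\Big|^2. \]
Expanding the inner square as $\sum_{k_1,k_2\in B_\nu}\sum_{x\in\F_p^n}e_p(S_{k_1}(x)-S_{k_2}(x))$, the at most $L$ diagonal terms contribute at most $Lp^n$, and for an off-diagonal pair with $k_1>k_2$ and $d:=k_1-k_2\le\sqrt p$ one has
\[ S_{k_1}(x)-S_{k_2}(x)=\sum_{j=0}^{s-1}h_j\big(v_{(j_0+k_2)+d+j}(x)-v_{(j_0+k_2)+j}(x)\big), \]
so Lemma \ref{lem:technical} (applicable since $d\le\sqrt p\le(p-1)n-s$ and $\vh\ne0$) bounds the corresponding sum over $x$ by $3\big(\tfrac{s+d}{n}+1\big)p^{n-1}+4\big(\tfrac{s}{n}+1\big)p^{n-1/2}\ll_{s,n}p^{n-1/2}$, where $d\le\sqrt p$ is used to absorb the first term into $p^{n-1/2}$. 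Summing over the at most $L^2$ off-diagonal pairs gives $\sum_{x\in\F_p^n}\big|\sum_{k\in B_\nu}e_p(S_k(x))\big|^2\ll_{s,n}Lp^n+L^2p^{n-1/2}$.

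To conclude I would substitute back, obtaining a total of order $t^2Lp^n+(tL)^2p^{n-1/2}$. Since $tL\le K+L\le 2K$, the second summand is $\ll K^2p^{n-1/2}$; and from $t\le K/L+1$ together with $L=\min\{K,\lceil\sqrt p\rceil\}$ one gets $t^2L\le 2K^2/L+2L\ll K+K^2p^{-1/2}$, so the first summand is $\ll Kp^n+K^2p^{n-1/2}$. Hence the total is $\ll_{s,n}Kp^n+K^2p^{n-1/2}$, as required.

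The one genuinely non-mechanical point — and the step I expect to be the main obstacle — is calibrating the block length so that the factor $t^2$ lost to Cauchy--Schwarz is exactly compensated: too long a block makes Lemma \ref{lem:technical}'s $\tfrac{s+d}{n}p^{n-1}$ contribution blow up over the $\sim L^2$ pairs (which is exactly why one cannot feed the full double sum directly into Lemma \ref{lem:technical}), whereas too short a block makes $t^2$ too costly. The choice $L\asymp\sqrt p$ is the balance point, after which verifying the numerics (distinguishing, if one wishes, the ranges $K\le\sqrt p$ and $K>\sqrt p$) is routine.
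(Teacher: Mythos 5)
Your proof is correct and follows essentially the same route as the paper's: expand the square, bound the diagonal by $Kp^n$ and the off-diagonal terms via Lemma \ref{lem:technical}, and control large $K$ by splitting into blocks of length $\asymp p^{1/2}$ and applying Cauchy--Schwarz. The only difference is cosmetic — the paper treats $K\le p^{1/2}$ and $K>p^{1/2}$ as two explicit cases with block length $M=p^{1/2}$, whereas you unify them with $L=\min\{K,\lceil\sqrt p\rceil\}$.
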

\begin{proof}
We divide into the two cases $K\le p^{1/2}$ and $K> p^{1/2}$. In the first case we have
\begin{align*}
\sum_{x\in \F_p^n} \bigg| \sum_{k=0}^{K-1} e_p\Big(\sum_{j=0}^{s-1}h_jv_{j_0+j+k}(x)\Big)\bigg|^2 = \sum_{x\in\F_p^n} \sum_{m,l=0}^{K-1}e_p\Big(\sum_{j=0}^{s-1}h_j (v_{j_0+j+m}(x)-v_{j_0+j+l}(x))\Big)\\
    \le Kp^n + 2\sum_{d=1}^{K-1}\sum_{m=0}^{K-1-d}\bigg|\sum_{x\in\F_p^n} e_p\Big(\sum_{j=0}^{s-1} h_j(v_{j_0+m+j+d}(x)-v_{j_0+m+j}(x))\Big)\bigg| ,
\end{align*}
where we have split into the cases $m=l$ and $m\ne l$. Applying Lemma \ref{lem:technical} to the innermost sum when $d\le (p-1)n-s$, and applying the trivial bound for the $O_{s,n}(1)$ remaining values of $d$ then gives that this is
\begin{align*}
&\ll_{s,n} K p^n + \sum_{d=1}^{K-1} (K-d)\left(\frac{d}{n}p^{n-1} +(s/n+1)p^{n-1/2}\right) + p^n\\
&\ll_{s,n}  Kp^n + K^3p^{n-1} + K^2p^{n-1/2}.
\end{align*}
As the middle term never dominates for the considered range of $K$, we are done in this case.

In the second case, split the sum over $k$ into at most $K/M+1$ intervals of length $M=p^{1/2}$. On each interval we bound the sum as in the first case, and so by Cauchy--Schwarz it follows that 
\begin{align*}
\sum_{x\in \F_p^n} \bigg|\sum_{k=0}^{K-1} e_p\Big(\sum_{j=0}^{s-1}h_jv_{j_0+j+k}(x)\Big)\bigg|^2 &\ll_{s,n} \left(\frac{K^2}{M^2}+1\right)(Mp^n+M^3p^{n-1}+M^2p^{n-1/2})\\
&\ll_{s,n} K^2p^{n-1/2}.
\end{align*}
\end{proof}

We are now ready to prove the main theorem.

\begin{proof}[Proof of Theorem \ref{thm:discrepancy}]
Apply Theorem \ref{thm:koksma} with $H = \floor{N/2}$ to get
\begin{equation}
D_{s,\psi}(N;x) \ll \frac{1}{N}+\frac{1}{N}\sum_{0<\| \vh \|_\infty \le N/2} \frac{1}{\rho(\vh)}\bigg| \sum_{m=0}^{N-1} e_p\Big(\sum_{j=0}^{s-1}h_j v_{m+j}(x)\Big)\bigg|.
\label{eq:bound1}
\end{equation}
Let $k\geq 1$ be an integer. Observe that if $k > N-1$, we have that
\[\bigg|\sum_{m=0}^{N-1}e_p\Big(\sum_{j=0}^{s-1}h_j v_{m+j}(x)\Big)-\sum_{m=0}^{N-1}e_p\Big(\sum_{j=0}^{s-1} h_j v_{m+j+k}(x)\Big)\bigg| \leq 2N \leq 2k,\]
if $k \leq N-1$, since the two sums in $m$ overlap in all but $2k$ terms, we have that
\begin{align*}
&\bigg|\sum_{m=0}^{N-1}e_p\Big(\sum_{j=0}^{s-1}h_j v_{m+j}(x)\Big)-\sum_{m=0}^{N-1}e_p\Big(\sum_{j=0}^{s-1} h_j v_{m+j+k}(x)\Big)\bigg| \\
& = \bigg|\sum_{m=0}^{k-1}e_p\Big(\sum_{j=0}^{s-1}h_j v_{m+j}(x)\Big)-\sum_{m=N-k}^{N-1}e_p\Big(\sum_{j=0}^{s-1} h_j v_{m+j+k}(x)\Big)\bigg| \\
&\leq 2k.
\end{align*}
%\[\bigg|\sum_{m=0}^{N-1}e_p\Big(\sum_{j=0}^{s-1}h_j v_{m+j}(x)\Big)-\sum_{m=0}^{N-1}e_p\Big(\sum_{j=0}^{s-1} h_j v_{m+j+k}(x)\Big)\bigg| \le 2k\]
Therefore, for any integer $K \geq 1$ it holds that
\begin{equation*}
\begin{split}
K\bigg|\sum_{m=0}^{N-1}e_p\Big(\sum_{j=0}^{s-1}h_j v_{m+j}(x)\Big) \bigg| &\le \bigg|\sum_{k=0}^{K-1} \sum_{m=0}^{N-1} e_p\Big(\sum_{j=0}^{s-1}h_j v_{m+j+k}(x)\Big) \bigg|+\sum_{k=0}^K 2k\\
&\le \sum_{m=0}^{N-1} \bigg|\sum_{k=0}^{K-1} e_p\Big(\sum_{j=0}^{s-1}h_j v_{m+j+k}(x)\Big)\bigg| + O(K^2).
\end{split}
\end{equation*}
Combining this with \eqref{eq:bound1}, and noting that $\sum_{0<\| \vh \|_\infty \le H} \frac{1}{\rho(\vh)}\ll (\log H)^s$, then gives
\begin{equation}
D_{s,\psi}(N;x) \ll \frac{K}{N}(\log N)^s + \frac{1}{N}R(N,K,x)
\label{eq:bound4}
\end{equation}
where
\begin{equation}
R(N,K,x) = \frac{1}{K}\sum_{0<\| \vh \|_\infty \le N/2} \frac{1}{\rho(\vh)}\sum_{m=0}^{N-1}\bigg|\sum_{k=0}^{K-1}e_p\Big(\sum_{j=0}^{s-1}h_jv_{m+j+k}(x)\Big)\bigg|.
\label{eq:bound2}
\end{equation}

We now average over initial values $x$. By Cauchy--Schwarz one has
\[\left( \sum_{x\in\F_p^n}\bigg|\sum_{k=0}^{K-1}e_p\Big(\sum_{j=0}^{s-1}h_jv_{m+j+k}(x)\Big)\bigg| \right)^2 \le p^n \sum_{x\in\F_p^n} \bigg|\sum_{k=0}^{K-1} e_p\Big(\sum_{j=0}^{s-1}h_j v_{m+j+k}(x)\Big)\bigg|^2.
\]
Inserting this and the bound from Lemma \ref{lem:2nd-moment} into \eqref{eq:bound2} gives
\begin{equation}
\sum_{x \in \F_p^n} R(N,K,x) \ll_{s,n} Np^n (K^{-1/2}+p^{-1/4})(\log N)^s.
\label{eq:bound3}
\end{equation}

Now, let $N_j = 2^j$ and $K_j = \ceil{\Delta^{-2/3}N_j^{2/3}}$ for $j \in \set{ 0,1,\dots,\ceil{\log_2 p^n}}$. Let $\Omega_j\subseteq \F_p^n$ be the set of $x$ for which
\[ R(N_j,K_j,x) \ge C_{s,n}\Delta^{-1}N_j(K_j^{-1/2}+p^{-1/4}) (\log N_j)^s \log p^n,\]
where $C_{s,n}$ is the implied constant in \eqref{eq:bound3}. By \eqref{eq:bound3} we must have that $|\Omega_j| \le \Delta p^n/\log p^n$. Setting $\Omega = \cup_j \Omega_j$ we then have $|\Omega| \le \Delta p^n$, and for $x \not\in \Omega$ it holds that
\begin{equation}
R(N_j,K_j,x) \le C_{s,n} \Delta^{-1}N_j(K_j^{-1/2}+p^{-1/4}) (\log N_j)^s \log p^n
\label{eq:bound5}
\end{equation}
for all $j \le \ceil{\log_2 p^n}$.

Given $N$ such that $1\le N \le p^n$, take $\nu\in\N$ such that $N_{\nu-1}\le N < N_{\nu}$. By \eqref{eq:bound4} we have
\[ D_{s,\psi}(N;x) \ll \frac{K_\nu}{N_\nu}(\log N_\nu)^s + \frac{1}{N_\nu}R(N_\nu,K_\nu,x),\]
and so for $x \not\in \Omega$ it holds that 
\[ D_{s,\psi}(N;x) \ll (\Delta^{-2/3}N^{-1/3} + p^{-1/4}\Delta^{-1})(\log N)^s \log p^n\]
by \eqref{eq:bound5}, completing the first bound in the theorem.

For $D_{s,\psi}^n(N;x)$ we also apply Theorem \ref{thm:koksma} with $H = \ceil{N/2}$ to get
\begin{equation*}
D_{s,\psi}^n(N;x) \ll \frac{1}{N}+\frac{1}{N}\sum_{0< \| \vh \|_\infty \le N/2} \frac{1}{\rho(\vh)}\Big| \sum_{m=0}^{N-1} e_p\Big(\sum_{j=0}^{s-1}\vh_j\cdot \vu_{m+j}(x)\Big)\Big|,
\end{equation*}
where now $\vh = (\vh_0,\dots,\vh_{s-1})$ and $\vh_j = (h_{j,1},\dots,h_{j,n})$ for $j \in \set{0,\dots,s-1}$. Observe that
\[ \sum_{j=0}^{s-1} \vh_j\cdot \vu_{m+j}(x) = \sum_{j=0}^{s-1}\sum_{i=1}^{n} h_{j,i}v_{(m+j)n+i}(x) = \sum_{k=0}^{sn-1}h_j' v_{mn+k}(x), \]
where $h_k' = h_{j,i}$ if $k = nj+i$, $0\le i \le s-1$. We may therefore bound all sums exactly as before, with the only difference being that $sn$~replaces $s$.
\end{proof}

%%%%%%%%%%%%%%%%%%%%%%%%%%%%%%%%%%%%%%%%%%%%%%%%%%%
%%%%%%%%%%%%%%%%%%%%%%%%%%%%%%%%%%%%%%%%%%%%%%%%%%%

\section{The computational complexity of fractional jump sequences} \label{computation}

Let $\Psi$ be a transitive automorphism of $\PP^n$, and let $\psi$ be its fractional jump. We now want to establish the computational complexity of computing the $m$-th term of the sequence $\{ \psi^m (0) \}_{m \in \N}$. In particular in this section we will show that computing a term in our sequence is less expensive than computing a term of a classical inversive sequence of the same bit size.

Fix notations as in Section \ref{explicit}. For simplicity, let us restrict to  the case  in which $q$ is prime.
Let us first deal with the regime in which $q$ is large (which is the regime in which we got the discrepancy bounds in Section \ref{discrepancy}), so that most of the computations will be performed for points in $U_1$.
If one chooses $\Psi$ in such a way that the coefficients of the $F_j$'s are small (this is possible for example by taking $\Psi$ as the companion matrix of a projectively primitive polynomial with small coefficients), so that also the coefficients of the $f_j^{(1)}$'s are small, the multiplications for such coefficients cost essentially the same as sums. Therefore the computational cost of computing the $m$-th term of the sequence (given the $(m-1)$-th term) is reduced to the cost of computing $n$ multiplications in $\F_q$ and one inversion in 
$\F_q$ (as all the denominators of the $f_j^{(1)}$'s are equal). Let $M(q)$ (resp. $I(q)$) denote the cost of one multiplication (resp. inversion) in 
$\F_q$. The total cost of bit operations involved to compute a single term in the sequence is then \[C^{\text{new}}(q,n)=n M(q)+ I(q).\]

Using the fast Fourier transform for multiplications \cite{bib:SchStr71} and the
extended Euclidean algorithm for inversion \cite{bib:schonhage71}
one gets
\begin{align*}
M(q) &= O(\log(q)\log\log(q) \log \log \log (q)), \\
I(q) &= O(M(q)\log\log(q)).
\end{align*}

Let us compare this complexity with the complexity of computing the $m$-th term of an inversive sequence of the form $x_{m}=a/x_{m-1}+b$ over $\F_p$. The correct analogue is obtained when $q^n$ has roughly the same bit size as $p$. If one chooses $a,b$ small, one obtains that the complexity of computing $x_m$ is essentially the complexity of computing only one inversion modulo $p$, which is 
\[C^{\text{old}}(p)=O(\log(p)[\log \log(p)]^2 \log \log \log(p)).\] 
Now, since $q,n$ are chosen in such a way that $q^n$ has roughly the same bit size as $p$, we can write $C^{\text{old}}(q,n)=C^{\text{old}}(p)$.
It is easy to see that up to a positive constant we have
\[\frac{C^{\text{new}}(q,n)}{C^{\text{old}}(q,n)}\leq \frac{1}{\log \log q}+\frac{1}{n},\] which goes to zero as $n$ and $q$ grow.

It is also interesting to see that with our construction we have the freedom to choose $q$ relatively small and $n$ large (so that again one gets $q^n\sim p$). 
In this case one can see that ${C^{\text{new}}(q,n)}/{C^{\text{old}}(q,n)}$ goes to zero as $([\log(n)]^2\log \log(n))^{-1}$.
If one tries to do something similar with an ICG (i.e. reducing the characteristic but keeping the size of the field large), one would anyway have to compute an inversion in $\F_{q^n}$ which costs $O(n^2)$ $\F_q$-operations, see \cite[Table 2.8]{bib:MVOV96}, while in our case we would only need to invert one element and multiply $n$ elements in $\F_q$, which costs $(n+1)$ $\F_q$-operations.

%%%%%%%%%%%%%%%%%%%%%%%%%%%%%%%%%%%%%%%%%%%%%%%%%%%
%%%%%%%%%%%%%%%%%%%%%%%%%%%%%%%%%%%%%%%%%%%%%%%%%%%

\section{Conclusion and further research}
Using the theory of projective maps, we provided a general construction for full orbit sequences over $\mathbb A^n$. Our theory generalises the standard construction for the inversive congruential generators.
Let us summarise the properties of fractional jump sequences obtained in this paper:
\begin{itemize}
\item We completely characterise the full orbit condition for such sequences (Theorem \ref{transitivity_characterisation}).
\item In dimension $1$ they cover the theory of ICG sequences.
\item In dimension greater than $1$, they are automatically full orbit whenever $(q^{n+1}-1)/(q-1)$ is a prime number, which is something that can never occur in the case of ICG sequences, as $2$ always divides $q+1$ when $q$ is odd.
\item In any dimension, they enjoy the same discrepancy bound as the one of the ICG, so they appear to be a good source of pseudorandomness both when one desires a one dimensional sequence of pseudorandom elements  or if one desires a stream of $n$-dimensional pseudorandom points (Theorem \ref{thm:discrepancy}). 
\item  They are very inexpensive to compute: for $n>1$ computations are asymptotically quicker than the ones of an ICG sequence, as described in Section \ref{computation}. The moral reason for this is that at each step the ICG generates  a $1$-dimensional pseudorandom point with exactly one inversion in $\F_q$, on the other hand at each step our construction generates an $n$-dimensional point of $\F_q$ (again, using only one inversion).
\end{itemize}

Some research questions arising are the following.
\begin{enumerate}
\item 
As our bound on the discrepancy holds for any transitive non-affine fractional jump sequence, can one build special fractional jump sequences having strictly better discrepancy bounds with respect to the one of the ICG?
\item What happens if one replaces the finite field with a finite ring? Can we extend the fractional jump construction to this case?
\item Can the notion of fractional jump be extended to more general objects such as quasi-projective varieties and produce competitive behaviours as in the projective space setting?
\end{enumerate}

\section*{Acknowledgments}

The authors would like to thank Violetta Weger for checking the preliminary version of this manuscript.
The third author would like to thank the Swiss National Science Foundation grant number 171248.

\bibliographystyle{abbrv}
\bibliography{Bibliography.bib}

\end{document}